\newtheorem{thm}{Theorem}[section]
\newtheorem{prop}[thm]{Proposition}
\newtheorem{lem}[thm]{Lemma}
\newtheorem{cor}[thm]{Corollary}
\theoremstyle{definition}
\newtheorem{definition}[thm]{Definition}
\theoremstyle{remark}
\newtheorem{remark}[thm]{Remark}
\numberwithin{equation}{section}
\def\lr#1{\langle#1\rangle}
\def\blr#1{\big\langle#1\big\rangle}
\def\ri{\rightarrow}
\def\Si{\Sigma}
\def\id{\textnormal{id}}
\def\ev{\textnormal{ev}}
\def\wt#1{\widetilde{#1}}
\def\cD{\mathcal{D}}
\def\cJ{\mathcal{J}}
\def\gm{\gamma}
\def\U{\textnormal{U}}
\def\SO{\textnormal{SO}}
\def\eps{\epsilon}
\def\prt{\partial}
\newcommand{\C}{\mathbb{C}}  % The complex numbers.
\newcommand{\mbp}{\mathfrak{B}_{l,{\bf k}}^{g,h}(M,L, {\bf b})}
\newcommand{\mmp}{\mathfrak{M}_{l,{\bf k}}^{g,h}(M,L, {\bf b})}
\newcommand{\dmp}{\mathfrak{M}_{l,{\bf k}}^{g,h}}
\newcommand{\mhp}{\mathcal{H}_{l,{\bf k}}^{g,h}(M,L, {\bf b})}
\newcommand{\db}[2]{\bar{\partial}_{(#1, #2)}}  % bar partial  over some bundles
\newcommand{\bp}{\bar{\partial}} % bar partial
\newcommand{\lL}{\mathcal{L}(L)} %free loop space of L
\newcommand{\tz}[1]{\mathcal{Z}_{#1}}%twisted system
\newcommand{\R}{\mathbb{R}}
\DeclareMathOperator{\Ker}{Ker}
\DeclareMathOperator{\coker}{Coker}
\DeclareMathOperator{\aut}{Aut}
\DeclareMathOperator{\Hom}{Hom}
\begin{document}

\title[The orientability problem in open Gromov-Witten theory] {The
orientability problem in \\open Gromov-Witten theory}
\author[Penka Georgieva]{Penka Georgieva*} \thanks{*Partially supported by   NSF grants DMS-0605003 and DMS-0905738.}
\address{Department of Mathematics, Princeton University, Princeton, NJ 08544}
\email{pgeorgie@math.princeton.edu}

\begin{abstract} We give an explicit formula for the holonomy of  the
orientation bundle of a family of real Cauchy-Riemann operators. A special case
of this formula   resolves the orientability question  for spaces of
maps from Riemann surfaces with Lagrangian boundary condition. As a corollary,
we show that the local system of orientations on the moduli space of
$J$-holomorphic maps from a bordered Riemann surface to a symplectic manifold
is isomorphic to the pull-back of a local system defined on the product of the
Lagrangian and its free loop space. As another corollary, we show that certain natural
bundles over these moduli spaces have the same local systems of orientations as the moduli
spaces themselves (this is a prerequisite for integrating the Euler classes of these bundles). 
We will apply these conclusions in future papers to construct and compute open Gromov-Witten invariants
in a number of settings.
\end{abstract}

\maketitle

\tableofcontents
\section{Introduction}
 The theory of $J$-holomorphic maps
introduced by Gromov \cite{Gr} plays a central role in the
study of symplectic manifolds. Considerations in theoretical physics led to the
development of the Gromov-Witten
invariants. They are invariants of symplectic manifolds and can be interpreted
as  counts of $J$-holomorphic maps from a
closed Riemann surface passing
through prescribed constraints. Open String Theory motivated the study of
$J$-holomorphic maps
from a bordered Riemann surface with boundary mapping to
 a Lagrangian submanifold and predicts the existence of open
Gromov-Witten invariants. Their mathematical definition, however, has proved to
be
a subtle problem. Two main obstacles are the question of  orientability
and the existence of real codimension one boundary strata of the moduli space of
maps from a bordered Riemann surface. This work addresses the first of these
issues.\\

The orientability question in the case $\Sigma=D^2$ was previously studied in
\cite{FOOO}; see also \cite{EES}, \cite{WW}, \cite{Wel}.
The authors showed that the moduli space of $J$-holomorphic maps from $D^2$  is
not always orientable; see also \cite{deS}. However,  in the case
of a relatively spin Lagrangian, they proved that the moduli space
  is orientable and that a choice of a relatively spin structure determines an
orientation. This result was extended by Solomon \cite{Sol} to
 relatively pin$^\pm$ Lagrangians and Riemann surfaces of higher genus with a
fixed complex structure. Solomon constructed a canonical isomorphism
between the determinant line bundle of the moduli space and the pull-back by the
evaluation maps of a certain number of
 copies of $\det(TL)$.   We extend these results to any Lagrangian and allow the
complex structure on the domain to vary.\\

In this paper, we give an explicit  criterion specifying whether the determinant
line bundle of a loop of real Cauchy-Riemann operators over   bordered Riemann
surfaces is  trivial; see Theorem \ref{main_thm}.   As a corollary, we conclude
that the local system of orientations on the moduli space of $J$-holomorphic
maps from a bordered Riemann surface
is isomorphic to the pull-back of a local system defined on the product of the
Lagrangian and its free loop space; see Corollary \ref{ls_cor}.  Our formula
recovers the orientability results obtained in \cite{FOOO} and \cite{Sol} as special cases. As another corollary, we show that the local systems of orientations of certain natural
bundles over these moduli spaces are canonically isomorphic to the  local systems of orientations of the moduli
spaces themselves which is a prerequisite for integrating the Euler classes of these bundles; see Corollary \ref{some_cor}. This is a generalization of \cite[Lemma 12]{psw}.\\

If $M$ is a manifold, possibly with boundary, or a (possibly nodal) surface, and $L\subset M$ is a submanifold,
  a \textsf{bundle pair} $(E,F)\ri(M,L)$   consists of a complex vector
bundle $E\ri M$ and a maximal totally real subbundle $F\subset
E_{|L}$. A \textsf{real Cauchy-Riemann
operator} on a bundle pair $(E,F)\ri(\Si,\partial \Si)$,  where $\Si$ is an oriented surface with boundary $\prt\Si$, is a linear map of the
form
$$D=\bp+A:\Omega^0(E,F)\ri\Omega^{0,1}(E),$$
where $\bp$ is the holomorphic $\bp$-operator for some complex structure $j$ on
$\Si$ and a holomorphic structure in $E$ and  $$A\in\Gamma(\Si, \Hom_\R(E,
T^*\Si\otimes_\C E))$$ is a zeroth-order deformation term. All real
Cauchy-Riemann operators are Fredholm in appropriate completions; see
\cite[Theorem C.1.10]{MS}. \\

Let $I=[0,1]$. Given an orientation-preserving diffeomorphism $\phi\!:\Sigma\rightarrow\Sigma$, let
$$(M_\phi,\partial M_\phi)=\big((\Sigma,\partial\Sigma)\times
I\big)/(x,1)\sim(\phi(x),0)$$
be the mapping torus of $\phi$ and $\pi\!: M_\phi\ri S^1$ be the projection map.
For each $t\in S^1$, let $\Si_t=\pi^{-1}(t)$ be the fiber over $t$.   A continuous family of real Cauchy-Riemann
operators on $(E,F)$ is a collection of real Cauchy-Riemann operators
 $$D_t:\Omega^0(E_{|\Si_t},F_{|\partial \Si_t})\ri\Omega^{0,1}(E_{|\Si_t})$$
which varies continuously with $t\in S^1$. We denote by $\det(D)\ri S^1$ the
determinant line bundle corresponding to this family; see \cite[Section
A.2]{MS}.

\begin{thm} \label{main_thm}
Let $\Si$ be a smooth oriented bordered surface, $\phi\!:\Si\ri\Si$ be a diffeomorphism
preserving the orientation and each boundary component, and $(E,F)$ be a bundle pair over $(M_\phi,
\partial M_\phi)$. For each boundary component  $(\partial\Si)_i$ of $\Si$,
choose a section $\alpha_i$ of $$(\partial M_\phi)_i=
M_{\phi_{|(\partial\Si)_i}}\ri S^1.$$ If  $D$ is any family of real
Cauchy-Riemann operators on $(E,F)$, then
\[
 \lr{w_1(\det D), S^1}=\sum_{i} \big(\blr{w_1(F), (\partial \Si)_i}+1\big)
\lr{w_1(F),\alpha_i}+
\sum_{i} \blr{w_2(F),(\partial M_\phi)_i}.
\]
\end{thm}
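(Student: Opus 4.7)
The plan is to exploit the fact that both sides of the formula are topological invariants: the left-hand side depends only on the homotopy class of the family of Fredholm operators and the isomorphism class of $(E,F)$, while the right-hand side is built from the characteristic classes of $F$ and fixed homology data. I will therefore reduce the general case to a simple model by deformations and degenerations, and then compute directly on that model.

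I would first pinch curves in the interior of each fiber $\Si_t$ to degenerate $\Si$ into a nodal surface consisting of a closed Riemann surface $\Si_0$ attached at $h$ nodes to $h$ disks $D_i$ (one per boundary component of $\Si$), with $\partial D_i=(\partial\Si)_i$. Under this degeneration, the family of real CR operators splits up to a canonical isomorphism, and so its determinant line factors as the tensor product of the determinant of a family of complex-linear CR operators over $\Si_0$---which is complex-oriented and so contributes nothing to $w_1$---with the determinants of the restricted families over each $D_i$. Since the right-hand side of the claimed formula is also additive in $i$, it suffices to establish it when $\Si=D^2$. Using the contractibility of $\textnormal{Diff}^+(D^2)$, we may then isotope $\phi$ to the identity, leaving the model family $(D^2\times S^1,\partial D^2\times S^1)$ with a bundle pair $(E,F)$ where $F$ lives over $T^2$ (or a Klein bottle if the orientation along the boundary circle gets reversed by the isotopy stage).

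For the disk model, I would exploit additivity of determinant lines under direct sums of bundle pairs to reduce to a short list of generating bundle pairs, classified by the invariants $w_1$ and $w_2$ together with a Maslov-type index. The cleanest computational tool is the doubling construction: fibrewise doubling the disks to $S^2$ produces a family of complex-linear CR operators on a doubled bundle $E_d\to S^2\times S^1$ whose determinant line is complex-oriented. The original real determinant line is recovered as the fixed locus of the induced antilinear involution on the doubled determinant, and $\lr{w_1(\det D),S^1}$ can then be extracted from standard real/complex $K$-theory identities. Naturality of the doubling, applied on the boundary torus $T^2$, translates these identities into the $w_1(F)$ and $w_2(F)$ expressions appearing on the right-hand side.

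The main obstacle will be the generator-by-generator verification together with the delicate bookkeeping of cross-terms under direct sum. The left-hand side is strictly additive, whereas the right-hand side is additive only modulo a cross-term involving $w_1(F_1)\cup w_1(F_2)$ coming from the Whitney formula for $w_2(F_1\oplus F_2)$; this must balance precisely. In particular, the coefficient $\bigl(\lr{w_1(F),(\partial\Si)_i}+1\bigr)$ in front of $\lr{w_1(F),\alpha_i}$ reflects whether $F$ restricted to the fiber circle is orientable, and reconciling this parity-dependence with the additivity of $\det$ on direct sums is where I expect most of the work to go.
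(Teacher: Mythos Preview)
Your outline shares the pinching step with the paper (degenerate $\Si$ to a closed surface glued to disks, so the closed piece contributes nothing and one is left with disks), but diverges at the crucial point. The paper does \emph{not} analyze the general bundle pair on the disk via doubling or a generator-by-generator argument. Instead, \emph{before} pinching, it applies an algebraic stabilization: writing $(E^1,F^1)=(\det_{\C}E,\det F)$ and $(\wt E,\wt F)=(E\oplus 3E^1,F\oplus 3F^1)$, the short-exact-sequence isomorphism for determinants gives
\[
\det(\bp_{E,F})\otimes\det(\bp_{4E^1,4F^1})\;\cong\;\det(\bp_{\wt E,\wt F})\otimes\det(\bp_{E^1,F^1}).
\]
Since $\wt F$ and $4F^1$ are orientable while $F^1$ has rank one, the problem splits cleanly into an orientable case (yielding the $w_2$ term, via a classification of orientable bundle pairs on $D^2\times S^1$ by $w_2$) and a line-bundle case (yielding the $w_1$ term, via the elementary fact that the kernel of a rank-one CR operator of Maslov index $\mu\ge -1$ evaluates isomorphically onto $\bigoplus_{j=1}^{\mu+1}F_{x_j}$). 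The $4F^1$ piece contributes nothing because $w_2(4F^1)=0$. This stabilization trick is precisely what eliminates the cross-term bookkeeping you flag as your main obstacle: the Whitney cross-term in $w_2$ and the parity cross-term in the $(\lr{w_1,\partial\Si}+1)\lr{w_1,\alpha}$ factor never interact, because the two contributions come from separate bundles.

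Two smaller points. Your parenthetical about a Klein bottle is incorrect: since $\phi$ preserves orientation and each boundary component, its restriction to each $(\partial\Si)_i$ is orientation-preserving, so $(\partial M_\phi)_i$ is always a torus. And your doubling proposal, while standard in related settings, is only sketched here; the paper's route through the rank-one kernel computation is both more direct and avoids any appeal to real/complex $K$-theory identities.
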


\begin{remark}
 Any two choices of sections $\alpha_i$ differ by a multiple of $(\prt\Si)_i$
and thus give the same $i$-th term in the first sum above. 
\end{remark}

We prove this theorem in Section \ref{sec:w1}  by showing that the determinant
line bundle of $D$ is isomorphic to the tensor product of  the determinant line
bundle of a $\bp$-operator on a line bundle and the determinant line bundle of a
$\bp$-operator on an orientable bundle. The evaluation of their first
Stiefel-Whitney classes   then gives the two parts in the formula in
Theorem \ref{main_thm}. 

\begin{remark}\label{fam_rem} Families of real Cauchy-Riemann operators often arise by pulling back data from
a target manifold by smooth maps as follows. Suppose $(M, J)$ is an almost
complex manifold, $L\subset M$ is a submanifold, $(E,\mathfrak{I})\ri M$ is  a
complex vector bundle, and $F\subset E_{|L}$ is a maximal totally real
subbundle. Let $\nabla$ be a connection in $(E, \mathfrak{I})$ and $A\in
\Gamma(M, \Hom_\R(E, T^*M^{0,1}\otimes_\C E))$. For any map $u:
(\Si,\partial\Si)\ri (M,L)$, let $\nabla^u$ denote the induced connection in
$u^*E$ and
$$ A_u=A\circ \partial u\in\Gamma(\Si, T\Si^{0,1}\otimes_\C u^*E).$$
If $u:(\Si,\partial \Si)\ri (M,L)$ and $j$ is a complex structure on $\Si$, the
homomorphisms
$$\bp_u^\nabla=\frac{1}{2}(\nabla^u+\mathfrak{I}\circ \nabla^u\circ j), \,\,D_u\equiv
\bp_u^\nabla+A_u: \Omega^{0}(\Si,\partial\Si; u^*E, u_{|\partial\Si}^* F)\ri
\Omega^{0,1}(\Si, u^*E)$$
are real Cauchy-Riemann operators that form families of real Cauchy-Riemann operators
over families of maps.\end{remark}

Throughout this paper,  we denote by  $(M,\omega)$ a symplectic manifold, by
$L\subset M$ a Lagrangian submanifold, and by $J$ a tame almost complex
structure on $M$.  Fix a tuple of homology classes
\begin{equation}\label{btuple_eq}
{\bf b}=(b,b_1,..,b_h)\in H_2(M,L)\oplus H_1(L)^{\oplus h}, 
\end{equation}
an oriented bordered
  surface
$(\Sigma,\partial \Sigma)$ of genus $g$, an ordering of the boundary components
 $$\partial \Sigma =\coprod_{i=1}^h(\partial \Sigma)_i\cong\coprod_{i=1}^hS^1,$$
a non-negative integer $l$, and a tuple ${\bf k}=(k_1,\,..\,, k_h)\in
\mathbb{Z}^h_+$.
Let
$\mbp$  be the space of tuples $(u, {\bf z}, {\bf x}_1,\,..\,, {\bf x}_h)$, where
\begin{itemize}
\item ${\bf z}$ is a tuple of $l$ interior marked points,
\item ${\bf x}_i$ is a tuple of $k_i$   marked points on   $(\prt\Si)_i$,
\item  $u$ is a map from   $(\Sigma,\partial\Si)$ to
$ (M,L)$,
which represents the class $b\in H_2(M,L)$ and for which the
restriction $u_{|(\partial\Sigma)_i}$ represents the class $b_i\in H_1(L)$.
\end{itemize}
Let $\mathcal{J}_\Si$ be the space of complex structures on $\Si$, $\mathcal{D}$
be the diffeomorphism group of $\Si$ preserving the orientation and each
boundary component, and
$$\mhp=(\mbp\times\mathcal{J}_\Si)/\mathcal{D}.$$

 \begin{remark} \label{mfld_rem} Throughout the paper we assume that the action of $\mathcal{D}$ has no fixed points. 
Thus, the quotient space $\mhp$ is a topological manifold. 
This happens for example if there are sufficiently many marked points.
In applications to more general cases,  this issue can be avoided by working with Prym structures on Riemann surfaces; see \cite{Loo}. 
\end{remark}

 The determinant line bundle of a family of real Cauchy-Riemann operators
$D_{(E,F)}$ on $\mbp\times \cJ_\Si$ induced
by a bundle pair $(E,F)$ as in Remark \ref{fam_rem} descends to a line bundle over
$\mhp$, which we denote by $\det D_{(E,F)}$.
As a direct corollary of Theorem~\ref{main_thm}, we obtain the following result
on its orientability.

\begin{cor} \label{orient_cor} Let $\gamma$ be a loop in $\mhp$ and $\widetilde{\gamma}$ a path in
$\mbp\times\mathcal{J}_\Si$ lifting $\gamma$ such that
$\widetilde{\gamma}_1=\phi\cdot\widetilde{\gamma}_0$ for some $\phi
\in\mathcal{D}$ with $\phi_{|\partial\Si}=\id$. For each boundary component
$(\partial\Si)_i$ of $\Si$, denote by $\alpha_i:S^1\ri L$ and $\beta_i:S^1\times
(\partial\Si)_i\ri L$ the paths traced by a fixed point on $(\partial\Si)_i$ and
by the entire boundary component $(\partial\Si)_i$. Then,
\begin{equation}\label{orient_eq}  \lr{w_1(\det D_{(E,F)}),\gamma}=\sum_{i=1}^h
\big(\blr{w_1(F), b_i}+1\big) \lr{w_1(F),\alpha_i}+
\sum_{i=1}^h \blr{w_2(F),\beta_i}.
\end{equation}
\end{cor}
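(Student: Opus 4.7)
The plan is to deduce the corollary from Theorem \ref{main_thm} by building, out of the loop $\gamma$ and its lift $\widetilde\gamma$, a family of real Cauchy-Riemann operators over $S^1$ whose determinant line bundle is the pullback of $\det D_{(E,F)}$ along $\gamma$. Write $\widetilde\gamma(t)=(u_t,\mathbf{z}_t,\mathbf{x}_{1,t},\ldots,\mathbf{x}_{h,t},j_t)$. Since $\widetilde\gamma_1=\phi\cdot\widetilde\gamma_0$, the complex structures $j_t$ glue via $\phi$ to give a fiberwise complex structure on the mapping torus $\pi\!:M_\phi\to S^1$, and the maps $u_t\!:(\Si,\partial\Si)\to(M,L)$ assemble into a continuous map $u\!:(M_\phi,\partial M_\phi)\to(M,L)$.

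Next, I would pull back $(E,F)$ along $u$ to obtain a bundle pair $(u^*E,u^*F)\to(M_\phi,\partial M_\phi)$, and use the recipe of Remark \ref{fam_rem} to construct a continuous family $\{D_t\}_{t\in S^1}$ of real Cauchy-Riemann operators on $(u_t^*E,u_t^*F)$. Because the determinant line bundle of the family of operators on $\mbp\times\cJ_\Si$ descends to $\det D_{(E,F)}\to\mhp$, the monodromy of $\det D_{(E,F)}$ around $\gamma$ equals $\lr{w_1(\det D),S^1}$ for this $S^1$-family; this is the content that needs a brief verification, and is the main conceptual point of the reduction.

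Now I would apply Theorem \ref{main_thm}. The assumption $\phi_{|\partial\Si}=\id$ identifies each $(\partial M_\phi)_i$ with the torus $(\partial\Si)_i\times S^1$, and the restriction of $u$ to $(\partial M_\phi)_i$ is, by construction, the map $\beta_i$ of the corollary. Choosing the section $\alpha_i\!:S^1\to(\partial M_\phi)_i$ to be $\{p_i\}\times S^1$ for a fixed $p_i\in(\partial\Si)_i$, its composition with $u$ is precisely the loop $\alpha_i\!:S^1\to L$ of the corollary. Under these identifications,
\[
\lr{w_1(u^*F),(\partial\Si)_i}=\lr{w_1(F),b_i},\quad \lr{w_1(u^*F),\alpha_i}=\lr{w_1(F),\alpha_i},\quad \lr{w_2(u^*F),(\partial M_\phi)_i}=\lr{w_2(F),\beta_i},
\]
where the first equality uses that $u_{|(\partial\Si)_i}$ represents $b_i\in H_1(L)$. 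Substituting into the formula of Theorem \ref{main_thm} yields \eqref{orient_eq}.

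The hardest step is the descent argument in the second paragraph: one must check that the monodromy of $\det D_{(E,F)}$ along $\gamma$ is correctly computed by this mapping-torus family, independently of the choices of $\widetilde\gamma$ and of $\phi\in\cD$ realizing $\widetilde\gamma_1=\phi\cdot\widetilde\gamma_0$. The independence of the choice of $\alpha_i$ is automatic from the remark following Theorem \ref{main_thm}, since any two such sections differ by a multiple of $(\partial\Si)_i$.
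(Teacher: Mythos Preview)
Your proposal is correct and is precisely the argument the paper has in mind: the paper states Corollary~\ref{orient_cor} as ``a direct corollary of Theorem~\ref{main_thm}'' and gives no separate proof, so you have simply spelled out the intended reduction---assembling the lift $\widetilde\gamma$ into a map $u\!:(M_\phi,\partial M_\phi)\to(M,L)$, pulling back $(E,F)$, and reading off the right-hand side of Theorem~\ref{main_thm} via naturality of $w_1,w_2$ under $u^*$. The descent check you flag is exactly what makes $\det D_{(E,F)}$ a well-defined line bundle on $\mhp$ in the first place, so no further argument is needed there.
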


By Lemma \ref{lift_cor}, every loop $\gamma$ in $\mhp$ admits a lift $\widetilde{\gamma}$ such that
$\widetilde{\gamma}_1=\phi\cdot\widetilde{\gamma}_0$ for some $\phi
\in\mathcal{D}$ with $\phi_{|\partial\Si}=\id$. Thus, the first assumption in
Corollary \ref{orient_cor} imposes no restriction on $\gamma$.

\begin{cor}\label{rp} Let $\gamma$ and $\alpha_i$ be as in Corollary \ref{orient_cor}. If either $w_2(F)+w_1^2(F)$ or $w_2(F)$ belong to $ \textnormal{Im}(i^*\!:H^2(M)\ri H^2(L))$, then
 \begin{equation}\label{pin_eq} \lr{w_1(\det(D_{(E,F)})),\gamma}=\sum_{i=1}^h(\lr{w_1(F), b_i}+1)\lr{w_1(F)), \alpha_i}.\end{equation} In
particular, if $F$ is also orientable or $\lr{w_1(F),b_i}=1$ for every $i=1,\dots, h$, then   $\det(D_{(E,F)})$ 
is orientable.\end{cor}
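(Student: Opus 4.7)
The plan is to deduce~(\ref{pin_eq}) from Corollary~\ref{orient_cor} by showing that the extra sum $\sum_{i=1}^h \lr{w_2(F),\beta_i}$ appearing in~(\ref{orient_eq}) vanishes modulo~$2$ under either hypothesis. Once this is established, the \emph{in particular} clause follows by direct inspection of~(\ref{pin_eq}).

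The first step is geometric: the tori $\beta_i$ collectively bound a $3$-chain in $M$. Since $\phi_{|\partial\Si}=\id$, the lift $\tilde\gamma$ assembles into a continuous map $U\colon M_\phi\to M$ from the mapping torus, whose restriction to the $i$-th boundary component $(\partial M_\phi)_i=(\partial\Si)_i\times S^1$ coincides, up to the inclusion $L\hookrightarrow M$, with $\beta_i$. Because $[\partial M_\phi]=0$ in $H_2(M_\phi;\mathbb{Z}/2)$, the pushforward $\sum_i i_\ast[\beta_i]$ vanishes in $H_2(M;\mathbb{Z}/2)$, and therefore $\sum_i\lr{i^\ast\eta,\beta_i}=0$ for every $\eta\in H^2(M;\mathbb{Z}/2)$.

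The second step invokes the Wu formula on the orientable surface $T^2$: for any $x\in H^1(T^2;\mathbb{Z}/2)$,
\[
\lr{x^2,[T^2]}=\lr{\textnormal{Sq}^1 x,[T^2]}=\lr{w_1(T^2)\cup x,[T^2]}=0.
\]
Applying this with $x=\beta_i^\ast w_1(F)$ gives $\sum_i\lr{w_1^2(F),\beta_i}=0$. Combining the two steps now kills the $w_2$-sum: if $w_2(F)=i^\ast\eta$, the first step applies directly; if $w_2(F)+w_1^2(F)=i^\ast\eta$, splitting
\[
\sum_i\lr{w_2(F),\beta_i}=\sum_i\lr{w_2(F)+w_1^2(F),\beta_i}+\sum_i\lr{w_1^2(F),\beta_i}
\]
reduces the claim to the two steps. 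This proves~(\ref{pin_eq}). The \emph{in particular} clause then follows because orientability of $F$ makes each $\lr{w_1(F),\alpha_i}$ factor vanish, while $\lr{w_1(F),b_i}=1$ for all $i$ makes each coefficient $\lr{w_1(F),b_i}+1$ even.

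The main obstacle I foresee is the construction of $U\colon M_\phi\to M$: one must carefully match the mapping-torus identification $(x,1)\sim(\phi(x),0)$ with the endpoint condition $\tilde\gamma_1=\phi\cdot\tilde\gamma_0$ under the $\mathcal{D}$-action on maps. Once this convention check is in place, the rest is routine algebraic topology.
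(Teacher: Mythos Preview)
Your proof is correct and follows essentially the same approach as the paper: both arguments show that the tori $\beta_i$ collectively bound a $3$-chain in $M$ coming from the swept-out image of $\Sigma$ (the paper phrases this via the long exact sequence of the pair $(M,L)$ and a relative class $S\in H_3(M,L)$ with $\partial S=\sum\beta_i$, you via the pushforward $\sum i_*[\beta_i]=0$ in $H_2(M)$), and both use that $w_1^2(F)$ vanishes on each torus. Your justification of the latter via the Wu formula and your explicit construction of $U\colon M_\phi\to M$ are slightly more detailed than the paper's one-line assertions, but the substance is identical.
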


The presence of $w_2(F)$ in
(\ref{orient_eq}) means that in  general the local system of orientations on $\det(D_{(E,F)})$ is not the pull-back of a system on $L$.  
In Section \ref{sec:ls},
we construct
a local system $\mathcal{Z}^F_{(w_1,w_2)}$ on the $h$-fold product of the
Lagrangian and its free loop space $\lL$, which traces the twisting coming from
the right-hand side of (\ref{orient_eq}). When there is at least one boundary
marked point on each boundary component, there is a natural map
$$\ev:\mhp\ri (L\times\lL)^h,$$
which is canonically determined up to homotopy; see Proposition \ref{evmap_prop}.
  We show that the pull-back of $\mathcal{Z}_{(w_1,w_2)}^F$ under this map
is isomorphic to the local system twisted by the first Stiefel-Whitney class of
$\det(D_{(E,F)})$. Moreover, $\ev^*\mathcal{Z}^F_{(w_1,w_2)}$ is trivial along the
fiber of the map forgetting the boundary marked point(s) and pushes down to the
space with no boundary marked points; see Lemma \ref{push_lm}.  Depending on the context, denote by
$\widetilde{\ev}^*\mathcal{Z}^F_{(w_1,w_2)}$ either  the pulled-back system or its
push-down under the forgetful map.

\begin{thm} \label{orient_thm}  There is a local system
$\mathcal{Z}^F_{(w_1,w_2)}$ on $(L\times \mathcal{L}(L))^h$ such that the local
system of orientations
 of $\det D_{(E,F)}$ is isomorphic to $\wt\ev^*\mathcal{Z}^F_{(w_1,w_2)}$. An isomorphism between the two systems is
 determined by a choice of a trivialization of $F$ over a basepoint in $L$ and
trivializations of
$F\oplus 3\det(F)$ over representatives for the homotopy classes of loops in
$L$.
\end{thm}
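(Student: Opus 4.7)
The strategy is to read off the local system $\mathcal{Z}^F_{(w_1,w_2)}$ directly from the right-hand side of formula (\ref{orient_eq}) in Corollary \ref{orient_cor}, identify its pullback by $\ev$ with the orientation system of $\det D_{(E,F)}$ via that same formula, then upgrade the resulting isomorphism class to a canonical isomorphism using the trivialization data.

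\emph{Construction.} Work on a single factor $L\times\mathcal{L}(L)$ and extend by external products over the $h$ boundary components. A loop in $L\times\mathcal{L}(L)$ is a pair $(\alpha,\beta)$ with $\alpha\!:\!S^1\to L$ and $\beta\!:\! S^1\times S^1\to L$, the second factor parametrizing the loop in $\mathcal{L}(L)$. Since $t\mapsto[\beta(t,\cdot)]\in H_1(L;\mathbb{Z}/2)$ is locally constant, it has a common value $[\eta]$, and I would set
$$\mathcal{Z}^F_{(w_1,w_2)}(\alpha,\beta):=(\lr{w_1(F),[\eta]}+1)\,\lr{w_1(F),[\alpha]}+\lr{w_2(F),[\beta]}\in\mathbb{Z}/2.$$
A short verification shows this expression is additive under concatenation (using that the coefficient $\lr{w_1(F),[\eta]}+1$ is constant on path components of $\mathcal{L}(L)$) and invariant under homotopy of $(\alpha,\beta)$, so it descends to a homomorphism $\pi_1((L\times\mathcal{L}(L))^h)\to\mathbb{Z}/2$ and thus specifies a local system.

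\emph{Matching the pullback.} For any loop $\gm$ in $\mhp$, Proposition \ref{evmap_prop} identifies $\ev(\gm)$ with the tuple $((\alpha_i,\beta_i))_{i=1}^h$ appearing in Corollary \ref{orient_cor}, with $[\beta_i(0,\cdot)]=b_i$. Plugging into the definition above and comparing with (\ref{orient_eq}) yields
$$\lr{w_1(\ev^*\mathcal{Z}^F_{(w_1,w_2)}),\gm}=\lr{w_1(\det D_{(E,F)}),\gm}$$
for every loop $\gm$, so the two $\mathbb{Z}/2$-local systems on $\mhp$ have the same first Stiefel-Whitney class and are therefore isomorphic. Lemma \ref{push_lm} then allows $\ev^*$ to be replaced by $\widetilde{\ev}^*$ in the absence of boundary marked points.

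\emph{Canonical isomorphism and main obstacle.} To pin down a \emph{specific} isomorphism I would use the prescribed trivialization data. The Theorem \ref{main_thm} decomposition of $\det D_{(E,F)}$ as a tensor product of a line-bundle and an orientable-bundle $\bp$-determinant identifies the sign ambiguity on each connected component with the orientation of $F$ at a single point, which is supplied by the trivialization of $F$ over the basepoint of $L$. For each generator of $\pi_1(L)$ the remaining $\mathbb{Z}/2$ ambiguity is resolved by a trivialization of $F\oplus 3\det(F)$ over a loop representative: a Whitney-product computation in $\mathbb{Z}/2$ gives
$$w_1(F\oplus 3\det F)=4w_1(F)=0,\qquad w_2(F\oplus 3\det F)=w_2(F)+6w_1(F)^2=w_2(F),$$
so $F\oplus 3\det F$ is orientable of rank $\ge 3$ and its space of homotopy classes of trivializations over $S^1$ is a $\mathbb{Z}/2$-torsor isomorphic to $\pi_1(\SO(n+3))$. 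I expect the main technical obstacle to be checking that these pointwise choices assemble into a globally coherent isomorphism: one must verify that replacing a loop representative within its homotopy class alters the $3\det F$ trivialization by precisely the $\mathbb{Z}/2$-element by which it alters the Theorem \ref{main_thm} decomposition, so that the two changes cancel. This should reduce to the naturality of the degeneration argument in Section \ref{sec:w1} combined with the identification of $\pi_1(\SO(n+3))$ with the group of automorphisms of a spin structure over $S^1$.
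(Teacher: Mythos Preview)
Your approach tracks the paper's closely: the construction of $\mathcal{Z}^F_{(w_1,w_2)}$ from the right-hand side of (\ref{orient_eq}), the matching of $w_1$-classes via Proposition~\ref{evmap_prop}, the push-down via Lemma~\ref{push_lm}, and the use of the stabilization $F\oplus 3\det F$ to pin down the isomorphism are exactly the paper's ingredients (the last is the content of Proposition~\ref{bp_prop}).

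You slightly misidentify the coherence obstacle, however. The theorem claims only that the trivialization data \emph{determines} an isomorphism; it does not assert independence from the choice of loop representatives, so there is nothing to cancel when a representative is replaced within its homotopy class. What the paper actually checks is independence from the choice of basepoint $u_0\in\mhp$ among those with $\ev(u_0)$ equal to the fixed basepoint $(\vec p,\vec\gamma)$ of $(L\times\mathcal{L}(L))^h$. Two such points $u_0,u_0'$ share the \emph{same} boundary loops $\gamma_i$ and boundary marked-point images $p_i$, so the trivialization data is literally identical for both; the issue is whether the resulting orientations of $\det D_{(E,F)}|_{u_0}$ and $\det D_{(E,F)}|_{u_0'}$ (produced by Proposition~\ref{bp_prop}) are intertwined by the parallel transport in $\mathcal{Z}_{w_1(\det D_{(E,F)})}$ along a path $\gamma$ from $u_0$ to $u_0'$. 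Since $\ev(\gamma)$ is a \emph{loop} in $(L\times\mathcal{L}(L))^h$, this reduces to checking that the sign change $(-1)^\epsilon$ recorded by Proposition~\ref{bp_prop} along $\gamma$ equals the monodromy of $\mathcal{Z}^F_{(w_1,w_2)}$ around $\ev(\gamma)$, which holds by construction. Your instinct that a coherence check is needed is correct, but it lives over $\mhp$, not over the target.
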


The $\bp$-operator on $\mbp\times\mathcal{J}_\Si$,  given by
$$\bp(u,j)=\frac{1}{2}(du+J\circ du\circ j),$$
descends to $\mhp$. The moduli space $\mathfrak{M}^{g,h}_{l,{\bf k}}(M,L,\mathbf{b})\subset \mhp$ consists of elements  $[u,{\bf z},{\bf x_1},\dots,{\bf x_h}]$
satisfying $\bp u=0$.
Linearizations of the $\bp$-operator along $\mbp$ are real Cauchy-Riemann
operators over $\Si$ induced by the bundle pair $(TM, TL)\ri (M,L)$; see
\cite[Section 3.1]{MS}. Their determinant line bundle descends to  a line bundle
over $\mhp$, which we denote by
$\det(D_{\bp})$.
The significance of this bundle comes from the fact that when the moduli space
is cut transversely, the top exterior power of its tangent bundle is
essentially the bundle $\det(D_{\bp})$.
As a   corollary of Theorem \ref{orient_thm}, we obtain the following statement concerning the orientation system of this moduli space.

\begin{cor} \label{ls_cor}  There is a local system
$\mathcal{Z}^{TL}_{(w_1,w_2)}$ on $(L\times \mathcal{L}(L))^h$ such that the local
system of orientations
 on the moduli space $\mathfrak{M}^{g,h}_{l,{\bf k}}(M,L,\mathbf{b})$
is isomorphic to $\widetilde{\ev}^*\mathcal{Z}^{TL}_{(w_1,w_2)}$.
An isomorphism between the two systems is
 determined by a choice of a trivialization of $TL$ over a basepoint in $L$ and
trivializations of
$TL\oplus 3\det(TL)$ over representatives for the homotopy classes of loops in
$L$.
\end{cor}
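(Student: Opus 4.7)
The plan is to deduce Corollary \ref{ls_cor} as a direct specialization of Theorem \ref{orient_thm} to the bundle pair $(E,F)=(TM,TL)$. By Remark \ref{fam_rem}, the linearization of the $\bp$-operator at an element $(u,j)\in \mbp\times\cJ_\Si$ is a real Cauchy-Riemann operator on the pulled-back bundle pair $(u^*TM,u_{|\partial\Si}^*TL)$, and these operators fit into a continuous family parametrized by $\mbp\times\cJ_\Si$. Their determinant line bundle descends to $\mhp$ and is by definition $\det(D_{\bp})$; in the notation used earlier in the paper this coincides with $\det D_{(TM,TL)}$. Applying Theorem \ref{orient_thm} with $F=TL$ therefore produces a local system $\mathcal{Z}^{TL}_{(w_1,w_2)}$ on $(L\times\lL)^h$ such that $\wt\ev^*\mathcal{Z}^{TL}_{(w_1,w_2)}$ is isomorphic to the orientation local system of $\det(D_{\bp})$, with the isomorphism determined by trivializations of $TL$ over a basepoint in $L$ and of $TL\oplus 3\det(TL)$ over chosen representatives for the homotopy classes of loops in $L$.

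What remains is to identify the orientation system of $\det(D_{\bp})$ with that of the moduli space $\mmp$ itself. On the locus where the moduli space is cut transversely, the cokernel of the linearized Cauchy-Riemann operator vanishes and its kernel is canonically identified with the tangent space of the moduli space, so the top exterior power of this tangent bundle is canonically isomorphic to $\det(D_{\bp})$, as noted in the paper just before Corollary \ref{ls_cor}. In the general case the same conclusion holds in the virtual sense: a Kuranishi or obstruction-bundle model of the moduli space has a virtual tangent bundle whose determinant is still $\det(D_{\bp})$. In either description, the orientation local system of $\mmp$ coincides with the orientation local system of $\det(D_{\bp})$, and Corollary \ref{ls_cor} follows.

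The only step that is not purely formal is that loops in $\mhp$, and hence in $\mmp$, must be tested against Theorem \ref{main_thm} by lifting them to paths in $\mbp\times\cJ_\Si$ whose endpoints are related by an element of $\cD$ acting as the identity on $\prt\Si$. This is furnished by Lemma \ref{lift_cor}, cited in the paper immediately after Corollary \ref{orient_cor}; once it is invoked, the reduction to Theorem \ref{orient_thm} is automatic and no further argument is needed.
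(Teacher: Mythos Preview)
Your overall strategy---specialize Theorem \ref{orient_thm} to $(E,F)=(TM,TL)$ and then identify the orientation system of $\det(D_{\bp})$ with that of $\mmp$---is the same as the paper's. The gap is in the second step. You assert that, in the transverse case, ``its kernel is canonically identified with the tangent space of the moduli space''. In this paper $D_{\bp}$ is the linearization \emph{along $\mbp$ only}; its kernel is the vertical tangent space of the forgetful map $f\!:\mmp\to\dmp$, not the full tangent space of $\mmp$. One has instead
\[
\Lambda^{\textnormal{top}}T\mmp \;\cong\; f^*\Lambda^{\textnormal{top}}T\dmp \,\otimes\, \det D_{\bp},
\]
so the orientation system of $\mmp$ agrees with that of $\det D_{\bp}$ only once you know that $\dmp$ is canonically oriented. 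The paper's word ``essentially'' before Corollary \ref{ls_cor} is hiding exactly this factor, and its proof spends most of its effort establishing the canonical orientation of $\dmp$ (via the holomorphic structure of \cite{IS} for ${\bf k}=(1,\dots,1)$ and compatibility under forgetful maps for general ${\bf k}$), and then treating separately the case where $\dmp$ is not a manifold by adding marked points. Your Kuranishi/obstruction-bundle remark does not address this: the virtual tangent bundle still carries the $f^*\Lambda^{\textnormal{top}}T\dmp$ factor.

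The third paragraph about Lemma \ref{lift_cor} is unnecessary here; that lemma is already absorbed into Corollary \ref{orient_cor} and Theorem \ref{orient_thm}, which you are invoking as black boxes.
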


\begin{remark}By Corollaries \ref{rp} and \ref{ls_cor}, $\mathfrak{M}^{g,h}_{l,{\bf k}}(M,L,{\bf b})$ is orientable if $L\subset M$ is relatively spin; this recovers \cite[Theorem 8.1.1]{FOOO}.  If $L\subset M$ is relatively pin$^\pm$, the orientation system of $\mathfrak{M}^{g,h}_{l,{\bf k}}(M,L,{\bf b})$ is a pull-back/push-down of    several copies of the orientation system of the
Lagrangian~$L$; this recovers \cite[Theorem 1.1]{Sol}.
\end{remark}

An important collection of examples of operators $D_{(E,F)}$ arises as follows.
If ${\bf a}=(a_1,\dots, a_m)$ is an $m$-tuple of positive integers and
$n\in\mathbb{Z}^+$, let
$$ \mathcal{L}_{n,{\bf a}}=\mathcal{O}_{\C\mathbb{P}^n}(a_1)\oplus \dots \oplus
\mathcal{O}_{\C\mathbb{P}^n}(a_m)\ri \C\mathbb{P}^n.$$
The natural conjugation on $\C\mathbb{P}^n$ lifts to $\mathcal{L}_{n,{\bf a}}$.
Denote its fixed locus by $\mathcal{L}_{n,{\bf a}}^\R$; this is a real vector
bundle over $\R \mathbb{P}^n$. Let
\begin{equation}\label{Vna_eq}\pi_{n, {\bf a}}\!:\mathcal{V}^\R_{n,{\bf
a}}=\mathfrak{M}^{g, h}_{l,{\bf k}}(\mathcal{L}_{n,{\bf a}},\mathcal{L}_{n,{\bf a}}^\R,{\bf b})\ri
\mathfrak{M}^{g,h}_{l,{\bf k}}(\C\mathbb{P}^n, \R\mathbb{P}^n,{\bf b}). \end{equation}
The fiber of $\pi_{n,{\bf a}}$ over     $[u, {\bf z},{\bf x_1},\dots, {\bf
x_h}]$ is canonically isomorphic to $\Ker \bp_{(u^*\mathcal{L}_{n,{\bf a}},
u^*\mathcal{L}_{n,{\bf a}}^\R)}$. By \cite[Theorem C.1.10(iii)]{MS},
$\bp_{(u^*\mathcal{L}_{n,{\bf a}}, u^*\mathcal{L}_{n,{\bf a}}^\R)}$ is
surjective if $\mu(b)\geq 4g+2h-2$. Thus, $\mathcal{V}^\R_{n,{\bf a}}$ is a vector
bundle in this case and its orientation line bundle agrees with $\det
(\bp_{(u^*\mathcal{L}_{n,{\bf a}}, u^*\mathcal{L}_{n,{\bf a}}^\R)})$.
The following corollary of Theorem~\ref{orient_thm},  suggests that it may be possible to integrate the twisted Euler
class $e(\mathcal{V}^\R_{n,{\bf a}})$ against the homology class of a
compactification  $\wt{\mathfrak{M}}_{l,{\bf k}}^{g,h}(\C\mathbb{P}^n,\R\mathbb{P}^n,{\bf b})$ of $\mathfrak{M}_{l,{\bf k}}^{g,h}(\C\mathbb{P}^n,\R\mathbb{P}^n,{\bf b})$ in
some cases, including when the corresponding complete intersection $X_{n,{\bf a}}$ is a Calabi-Yau threefold; see Remark~\ref{ci_rem}.

\begin{cor}\label{some_cor}
Let $m,n\in\mathbb{Z}^+$ and ${\bf a}\in(\mathbb{Z}^+)^m$ be such that $n-\sum a_i$ is odd. Let ${\bf b}$ be as in (\ref{btuple_eq}) with $(M,L)=(\C\mathbb{P}^n,\R\mathbb{P}^n)$. If $\mu(b)\geq 4g+2h-2$, the line bundles
$$
\Lambda^{\textnormal{top}}_\R \mathcal{V}^\R_{n;{\bf a}},\,\, \Lambda^{\textnormal{top}}_\R T\mathfrak{M}_{l,{\bf k}}^{g,h}(\C\mathbb{P}^n,\R\mathbb{P}^n,{\bf b})\ri \mathfrak{M}_{l,{\bf k}}^{g,h}(\C\mathbb{P}^n,\R\mathbb{P}^n,{\bf b})
$$
are   canonically isomorphic up to multiplication by $\R^+$ in each fiber.  \end{cor}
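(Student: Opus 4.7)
The plan is to realize both line bundles as determinants of real Cauchy-Riemann operators on bundle pairs over $(\C\mathbb{P}^n,\R\mathbb{P}^n)$ and then invoke Corollary \ref{rp}, which applies very cleanly in this setting. First, I would identify the two line bundles: by \cite[Theorem C.1.10(iii)]{MS}, the hypothesis $\mu(b)\ge 4g+2h-2$ gives surjectivity of the relevant $\bp$, so
\[
\Lambda^{\textnormal{top}}_\R\mathcal{V}^\R_{n;{\bf a}}\cong\det D_{(\mathcal{L}_{n,{\bf a}},\mathcal{L}^\R_{n,{\bf a}})}
\]
canonically, and similarly $\Lambda^{\textnormal{top}}_\R T\mathfrak{M}^{g,h}_{l,{\bf k}}(\C\mathbb{P}^n,\R\mathbb{P}^n,{\bf b})$ is identified with $\det D_{(T\C\mathbb{P}^n,T\R\mathbb{P}^n)}$.

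Next, I would observe that the restriction $i^*\!:H^2(\C\mathbb{P}^n;\mathbb{Z}/2)\to H^2(\R\mathbb{P}^n;\mathbb{Z}/2)$ is surjective (the hyperplane class hits $a^2$, the generator of the target for $n\ge 2$; the target is trivial for $n\le 1$). Hence $w_2(F)\in\textnormal{Im}(i^*)$ for every real bundle $F$ over $\R\mathbb{P}^n$, so Corollary \ref{rp} applies to any bundle pair $(E,F)$ over $(\C\mathbb{P}^n,\R\mathbb{P}^n)$ and gives
\[
\lr{w_1(\det D_{(E,F)}),\gamma}=\sum_{i=1}^h\big(\lr{w_1(F),b_i}+1\big)\lr{w_1(F),\alpha_i},
\]
depending on $(E,F)$ only through $w_1(F)\in H^1(\R\mathbb{P}^n;\mathbb{Z}/2)$.

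A direct computation then yields $w_1(\mathcal{L}^\R_{n,{\bf a}})=(\sum_i a_i)\cdot a$ (splitting the real bundle into line bundles with $w_1=(a_i\!\!\mod 2)\cdot a$ and using additivity) and $w_1(T\R\mathbb{P}^n)=(n+1)\cdot a$. The parity hypothesis $n-\sum_i a_i$ odd is precisely the condition that these classes coincide in $H^1(\R\mathbb{P}^n;\mathbb{Z}/2)$. Thus the right-hand side of the formula agrees for the two bundle pairs, giving equality of first Stiefel-Whitney classes of the two determinant line bundles and hence an isomorphism of real line bundles.

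For canonicity, I would invoke Theorem \ref{orient_thm}: both orientation systems pull back from local systems $\mathcal{Z}^{F_j}_{(w_1,w_2)}$ on $(L\times\mathcal{L}(L))^h$, with identifications determined by trivializations of $F_j$ over a basepoint and of $F_j\oplus 3\det(F_j)$ over loop representatives. Because the $w_1$'s of $F_1=\mathcal{L}^\R_{n,{\bf a}}$ and $F_2=T\R\mathbb{P}^n$ match on $L$, coherent choices of trivialization data identify the two pulled-back local systems on the image of $\widetilde{\ev}$, producing the canonical isomorphism up to $\R^+$. The main obstacle is this last step: verifying that the identification is independent of auxiliary choices modulo positive scaling requires carefully unpacking the construction of $\mathcal{Z}^F_{(w_1,w_2)}$ in Section \ref{sec:ls} and tracking the naturality of the trivialization data under changes of $F$.
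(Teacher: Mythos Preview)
Your approach is correct and structurally the same as the paper's: identify both line bundles with determinant lines of real Cauchy-Riemann operators, use the main orientation formula to show their first Stiefel-Whitney classes agree, then argue canonicity. Two minor differences and one small gap are worth flagging.

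First, you kill the $w_2$ terms via Corollary~\ref{rp}, using that $i^*\!:H^2(\C\mathbb{P}^n;\mathbb{Z}_2)\to H^2(\R\mathbb{P}^n;\mathbb{Z}_2)$ is surjective. The paper instead applies Corollary~\ref{orient_cor} directly and observes that $w_2(\mathcal{L}^\R_{n,{\bf a}})$ and $w_2(T\R\mathbb{P}^n)$ are each a multiple of $\eta^2$, hence squares of degree-one classes, and so vanish on every torus $\beta_i$. Both arguments are valid; yours is arguably cleaner since it invokes a ready-made corollary.

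Second, your ``similarly'' for $\Lambda^{\textnormal{top}}_\R T\mathfrak{M}\cong\det D_{(T\C\mathbb{P}^n,T\R\mathbb{P}^n)}$ is not quite parallel to the $\mathcal{V}^\R_{n,{\bf a}}$ case: the tangent bundle of the moduli space also involves the moduli of domains, and you need the argument in the proof of Corollary~\ref{ls_cor} (canonical orientability of $\mathfrak{M}^{g,h}_{l,{\bf k}}$) to reduce to $\det D_\bp$. Cite that.

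Third, on canonicity: you correctly locate the remaining work, but Theorem~\ref{orient_thm} alone does not finish it, since the isomorphisms it produces depend on trivialization choices for each $F_j$ separately. The paper completes this via Proposition~\ref{bp_prop}: it shows that a single trivialization of $\mathcal{O}^\R(1)$ over each $u_0(x_{1,i})$ induces trivializations of $F_j\oplus 3\det F_j$ over $\ev_i(u_0)$ and of $\det F_j$ over $u_0(x_{1,i})$ for both $F_1=\mathcal{L}^\R_{n,{\bf a}}$ and $F_2=T\R\mathbb{P}^n$, and then checks (using the parity hypothesis $\wt m\equiv n+1\bmod 2$) that changing the trivialization of $\mathcal{O}^\R(1)$ affects both resulting orientations identically. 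This is the concrete content behind your last paragraph.
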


\begin{remark}\label{ci_rem}
If $s\in H^0(\C\mathbb{P}^n, \mathcal{L}_{n,{\bf a}})$ is a transverse section
commuting with the conjugations on $\C\mathbb{P}^n$ and $\mathcal{L}_{n,{\bf a}}$,
$X_{n,{\bf a}}=s^{-1}(0)$ is a smooth complete intersection with conjugation
inherited from $\C\mathbb{P}^n$. The section $s$ induces a section $\wt s$ of
(\ref{Vna_eq}) such that
$$
\mathfrak{M}^{g,h}_{l,{\bf k}}(X_{n,{\bf a}},X_{n,{\bf a}}^\R,{\bf b}) = \wt{s}^{-1}(0)\subset
\mathfrak{M}^{g,h}_{l,{\bf k}}(\C\mathbb{P}^n,\R\mathbb{P}^n,{\bf b}),
$$
where $X_{n,{\bf a}}^\R=X_{n,{\bf a}}\cap\R\mathbb{P}^n$. 
This suggests that (open) Gromov-Witten invariants of $(X_{n,{\bf a}},X_{n,{\bf a}}^\R)$, which
should arise from the moduli space $\mathfrak{M}^{g,h}_{l,{\bf k}}(X_{n,{\bf
a}},X_{n,{\bf a}}^\R,{\bf b})$, can be computed by integrating the Euler class
$e(\mathcal{V}^\R_{n,{\bf a}})$ against $[\wt{\mathfrak{M}}^{g,h}_{l,{\bf
k}}(\C\mathbb{P}^n,\R\mathbb{P}^n,{\bf b})]$, which can be done via equivariant localization. By
\cite[Section 2.1.2]{Go}, \cite[Theorem 1.1]{LZ}, and \cite[Theorem 3]{psw}, this is indeed the case if
$g+h\leq 1$ and $X_{n,{\bf a}}$ is a Calabi-Yau threefold in the $h=1$ case.
Based on
\cite{Wal} and \cite{PZ}, there are strong indications that this is also the
case for $(g,h)=(0,2)$. 
We plan to investigate this in the future, building on Corollary \ref{some_cor}.
\end{remark}

The paper is organized as follows. In Section \ref{prelim_sec}, we set up the notation and establish some 
preliminary results.  We prove the key Theorem \ref{main_thm}, as well as Corollary \ref{rp},  in Section \ref{sec:w1}.  In Section \ref{sec:ls}, we construct
a local system $\mathcal{Z}_{(w_1,w_2)}^F$ on the $h$-fold product of the
Lagrangian and its free loop space, which traces the twisting coming from the
right-hand side in (\ref{orient_eq}).  We then show that its pull-back
is canonically isomorphic to the local system  twisted by the first
Stiefel-Whitney class of $\det(D_{(E,F)})$, thus establishing Theorem \ref{orient_thm}.   Corollaries  \ref{ls_cor} and \ref{some_cor} are proved at the end of the section.\\

The present paper is based on a portion of the author's thesis work completed at Stanford University. The author would like to thank her advisor Eleny Ionel for  her guidance    and encouragement throughout the years. The author would also like to thank  Aleksey Zinger  for suggesting Corollary \ref{some_cor}  and for his help   with the exposition.

\section{Conventions
 and preliminaries}\label{prelim_sec}

Let $X,Y$ be Banach spaces and $D:X\rightarrow Y$ be a Fredholm operator. The
\textsf{determinant line} of $D$ is defined as \[\det (D):=\Lambda ^{\textnormal{top}}\Ker
(D)\otimes \Lambda^{\textnormal{top}}\coker (D)\spcheck.\]
A short exact sequence of Fredholm operators
\[\begin{CD}
0
@>>>X'@>>>X@>>>X''@>>>0 \\
@. @V V D' V@VV D V@VV D'' V@.\\
0@>>> Y'@>>>Y@>>>Y''@>>>0
\end{CD}\]
 determines a canonical isomorphism
\begin{equation}\label{sum}
\det (D)\cong \det (D')\otimes \det (D'').
\end{equation}
For a continuous family of Fredholm operators $D_t:X_t\ri Y_t$  parametrized by
a topological space $B$, the determinant lines $\det(D_t)$ form a line bundle
over $B$; see \cite[Section A.2]{MS}. For a short exact sequence of such
families, the isomorphisms (\ref{sum}) give rise to a canonical isomorphism
between determinant line bundles. \\

Let $\Sigma$ be a nodal bordered Riemann surface  and $\pi\!:\wt{\Si}\ri\Si$ be
its normalization; fix an ordering of the nodes of $\Si$ and the boundary
components of $\wt{\Si}$.   A real Cauchy-Riemann operator $D_{(E,F)}$ on
$(E,F)\ri\Sigma$ corresponds to a real Cauchy-Riemann operator
$\widetilde{D}_{(E,F)}=\oplus_i D^i$ on $(\wt{E},\wt{F})\equiv
\pi^*(E,F)\ri\widetilde{\Sigma}$, where the sum is taken over the components of
$\wt{\Si}$.
Thus, by \eqref{sum}, there is a canonical isomorphism
\[\det(\widetilde{D}_{(E,F)})\cong\otimes_i\det(D^i)  .\]
On the other hand, by gluing together punctured disks around the special points
in $\widetilde{\Sigma}$, we obtain a smooth surface $\Sigma^\varepsilon$ and a
real Cauchy-Riemann operator $D^\varepsilon$ over $\Sigma^\varepsilon$ for a
gluing parameter $\varepsilon$. By \cite[Section 3.2]{EES} and \cite[Section
4.1]{WW}, for every sufficiently small $\varepsilon$ there is a canonical
isomorphism
\begin{equation}\label{dets}\det(D^\varepsilon)\cong
\det(\widetilde{D}_{(E,F)})\otimes\Lambda^{\textnormal{top}}(\bigoplus_j
E_{z_j}\oplus\bigoplus_j F_{x_j})\spcheck,\end{equation}
where $z_j$ and $x_j$ are the interior and boundary nodes, respectively.
Moreover, the gluing maps satisfy an associativity property: the isomorphism
\eqref{dets} is independent of the order in which we smooth  the nodes.

\begin{remark}\label{rem} Let $(E,F)\rightarrow (\Sigma,\partial\Sigma)$ be a
bundle pair. Choose a trivialization of $E$ over a curve
$C_\epsilon\subset\Sigma$ isotopic to one of the boundary components of
$\Sigma$. This trivialization can  be extended over  a   neighborhood $U$ of the
curve $C_\epsilon$. Pinching $\Sigma$ along $C_\epsilon$, we obtain a nodal curve $\Sigma^s$
with a diffeomorphism $(\Sigma-C_\epsilon)\rightarrow (\Sigma^s - \text{node})$.
We can pull back the bundle pair $(E,F)$ to $(\Sigma^s-\text{node})$. The
trivialization of $E$ over  the neighborhood $U$ of the curve $C_\epsilon$ induces a trivialization
in a punctured neighborhood of the node. It can be uniquely extended over
$\Sigma^s$. We say that the bundle pair $(E,F)$ \textsf{descends} to a bundle
pair on the nodal surface.
\end{remark}

\begin{lem} \label{iso_lm} Let $(\Si,\partial\Si)$ be a smooth oriented surface
with boundary. Every diffeomorphism $h\!:(\Sigma, \partial\Sigma)\rightarrow
(\Sigma,\partial\Sigma)$ which preserves the orientation and  each boundary
component is isotopic to a diffeomorphism which restricts to the identity
on a neighborhood of $\prt\Si$ in $\Sigma$.
\end{lem}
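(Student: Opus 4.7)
The plan is to reduce the lemma in two steps: first isotope $h$ through admissible diffeomorphisms to one that restricts pointwise to the identity on $\prt\Si$, and then further isotope it so that it is the identity on a full neighborhood of $\prt\Si$.

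For the first step, observe that since $h$ preserves the orientation of $\Si$ and each boundary circle $(\prt\Si)_i$, its restriction to $(\prt\Si)_i$ is an orientation-preserving diffeomorphism of $S^1$. The group of such diffeomorphisms is connected (it deformation retracts onto $\SO(2)$), so there is a smooth isotopy $g_t^i\!:(\prt\Si)_i\ri(\prt\Si)_i$ with $g_0^i=\id$ and $g_1^i=(h|_{(\prt\Si)_i})^{-1}$. Pick mutually disjoint collars $\nu_i\!:(\prt\Si)_i\times[0,\eps)\hookrightarrow\Si$ and a cutoff function $\rho\!:[0,\eps)\ri[0,1]$ with $\rho(0)=1$ that vanishes near $\eps$. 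Define $H_t\!:\Si\ri\Si$ by $H_t(\nu_i(x,s))=\nu_i(g_{t\rho(s)}^i(x),s)$ on the collars and $H_t=\id$ elsewhere. Then $H_0=\id$, each $H_t$ is an orientation-preserving, boundary-component-preserving diffeomorphism, and the isotopy $t\mapsto h\circ H_t$ carries $h$ to a diffeomorphism $h'=h\circ H_1$ with $h'|_{\prt\Si}=\id$.

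For the second step, assume now $h|_{\prt\Si}=\id$ and pick a collar $\nu\!:\prt\Si\times[0,1)\hookrightarrow\Si$. Then $h\circ\nu$ is another collar (on a possibly smaller interval) that agrees with $\nu$ on $\prt\Si\times\{0\}$. By the uniqueness of collar neighborhoods up to ambient isotopy rel $\prt\Si$, there is an isotopy $G_t\!:\Si\ri\Si$, compactly supported in a small neighborhood of $\prt\Si$, with $G_0=\id$, $G_t|_{\prt\Si}=\id$, and $G_1\circ\nu=h\circ\nu$ on a sub-collar $\prt\Si\times[0,\delta)$. Setting $\tilde h=G_1^{-1}\circ h$, the map $\tilde h$ equals the identity on the image of this sub-collar, and the path $t\mapsto G_t^{-1}\circ h$ is an isotopy from $h$ to $\tilde h$ through admissible diffeomorphisms.

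The principal difficulty is the collar uniqueness statement used in the second step, namely the production of an ambient isotopy supported near $\prt\Si$ and fixing $\prt\Si$ pointwise that pushes $\nu$ onto $h\circ\nu$ on a smaller collar. This is a classical result that follows from the convexity of the space of inward-pointing vector fields along $\prt\Si$: any two germs of collars are connected by a straight-line path of such fields, and integrating this path yields the required $G_t$. Since all isotopies are constructed on or near $\prt\Si$ and extended trivially, the orientation-preserving and boundary-component-preserving conditions are maintained automatically throughout.
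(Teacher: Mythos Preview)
Your proof is correct, but it organizes the argument differently from the paper. The paper works one boundary component at a time: it first uses an (implicit) collar-adjustment isotopy to arrange that $h$ preserves a thin collar $S^1\times[0,\epsilon]$ \emph{setwise}, and then invokes the path-connectedness of the group of orientation- and boundary-component-preserving diffeomorphisms of the cylinder (citing \cite{FM}, \cite{Mas}) to isotope $h|_{S^1\times[0,\epsilon]}$ to the identity, extending the isotopy to $\Si$ by cutting off the generating time-dependent vector field. Your decomposition instead first uses only the connectedness of $\textnormal{Diff}^+(S^1)$ to make $h$ the identity \emph{pointwise} on $\prt\Si$, and then appeals to the uniqueness of collars rel boundary to straighten $h$ on a neighborhood. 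Both routes are standard; yours trades the cylinder fact for the more elementary circle fact plus the classical collar-uniqueness theorem, while the paper's version is slightly more self-contained in its explicit vector-field extension but relies on a less commonly quoted statement about the annulus.
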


\begin{proof} Fix a component $(\partial\Si)_i\cong S^1$ of $\partial\Si$, an
identification of a neighborhood of $(\partial\Si)_i$ in $\Si$ with $S^1\times
[0,2\delta]$, and $\epsilon\in (0,\delta)$ such that
$h(S^1\times[0,\epsilon])\subset S^1\times [0,2\delta]$. After composing  $h$
with a path of diffeomorphisms on $\Si$ which restrict to the identity outside
$S^1\times [0,2\delta]$, we can assume that $h(S^1\times [0,\epsilon])=S^1\times
[0,\epsilon]$. By \cite[Proposition 2.4]{FM} and \cite[(1.1)]{Mas}, the group of diffeomorphisms of the
cylinder preserving the orientation and each boundary component is path-connected. Thus, there is a path of diffeomorphisms
$$f_t: S^1\times [0,\epsilon]\rightarrow S^1\times
[0,\epsilon]\quad\hbox{s.t.}\quad f_0=\id, ~ f_1=h^{-1}_{|S^1\times
[0,\epsilon]}.$$
The path $f_t$ generates a time-dependent vector field $X_t$. By multiplying
$X_t$  by a bump function on $\Si$ vanishing outside $[0,\epsilon]$ and
restricting to $1$ on $S^1\times [0,\frac{\epsilon}{2}]$, we obtain a
time-dependent vector field $\wt{X}_t$ on $\Si$. This new vector field gives
rise to  diffeomorphisms $\tilde{f}_t$ of $\Si$ which are identity outside
$S^1\times [0,\epsilon]$, while $\tilde{f}_1$ restricts  to $h^{-1}$ on
$S^1\times [0,\frac{\epsilon}{2}]$. Then $h\circ \tilde{f}_t$ is a path of
diffeomorphisms connecting $h$ with a diffeomorphism which restricts to the
identity in a neighborhood of $(\partial\Si)_i$.\end{proof}

\begin{lem}\label{def_lm}
Let $(\Si,\partial\Si)$ be a smooth oriented surface with boundary and $\phi\in
\cD$. Every family of real Cauchy-Riemann operators on a bundle pair $(E,F)$
over $M_\phi$ can be smoothly deformed to a  family of real Cauchy-Riemann
operators on a bundle pair $(E',F')$ over $M_{\phi'}$ for some $\phi'\in\cD$
such that $\phi'$ restricts to the identity on a neighborhood of $\prt\Si$.
\end{lem}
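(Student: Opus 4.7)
The plan is to promote the isotopy given by Lemma \ref{iso_lm} to a smooth one-parameter family of diffeomorphisms between mapping tori $M_\phi$ and $M_{\phi_s}$, and then to push the bundle pair and the family of real Cauchy-Riemann operators forward along them to obtain the required deformation.

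First, apply Lemma \ref{iso_lm} to get an isotopy $\{\phi_s\}_{s\in[0,1]}\subset\cD$ with $\phi_0=\phi$ and $\phi_1=\phi'$, where $\phi'$ restricts to the identity on a neighborhood of $\prt\Si$. Fix a small $\epsilon>0$ and represent $M_\phi$ and $M_{\phi_s}$ as quotients of $\Si\times[0,1]$. For each $s\in[0,1]$, define
$$
\Phi_s\colon M_\phi \ri M_{\phi_s}, \qquad \Phi_s\bigl([x,t]\bigr)=\begin{cases}\, [x,t], & t\in[\epsilon,1],\\ \bigl[\phi_{s(1-t/\epsilon)}(\phi^{-1}(x)),\, t\bigr], & t\in[0,\epsilon].\end{cases}
$$
A quick check at $t=1$ shows well-definedness: on one side $\Phi_s([x,1])=[x,1]=[\phi_s(x),0]$ in $M_{\phi_s}$, while on the other side $\Phi_s([\phi(x),0])=[\phi_s(\phi^{-1}(\phi(x))),0]=[\phi_s(x),0]$. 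Smoothness at $t=\epsilon$ follows since $\phi_0=\phi$ kills the isotopy factor there. Thus $\Phi_s$ is a diffeomorphism of pairs depending smoothly on $s$, with $\Phi_0=\id$.

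Now set $(E_s,F_s):=(\Phi_s^{-1})^*(E,F)$ as a bundle pair over $M_{\phi_s}$, and transport the given family of real Cauchy-Riemann operators on $(E,F)$ along $\Phi_s$ by pulling back the underlying fiberwise complex structures, holomorphic structures, and zeroth-order terms $A$ as in Remark \ref{fam_rem}. Specializing to $s=1$ yields the desired bundle pair $(E',F'):=(E_1,F_1)$ over $M_{\phi'}$ together with a family of real Cauchy-Riemann operators, and the entire one-parameter family $\{(E_s,F_s,\,D_s)\}_{s\in[0,1]}$ is the required smooth deformation. The main point to verify, and what I expect to be the only technical obstacle, is that the pushed-forward operator retains the form $\bp+A$ on each fiber of $M_{\phi_s}\ri S^1$; this is a tensorial computation, using that the pullback of a holomorphic structure under a fiberwise-smooth diffeomorphism is holomorphic for the pulled-back complex structure, and that the deformation term transforms as a section of $\Hom_\R(E,T^*\Si^{0,1}\otimes_\C E)$. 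Smoothness in $s$ is inherited directly from smoothness of $\phi_s$ and $\Phi_s$, so no further obstacle arises.
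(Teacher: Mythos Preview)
Your approach is essentially the paper's: both invoke Lemma~\ref{iso_lm} and transport the data fiberwise along the resulting isotopy; the paper pulls back $(j_t,E_t,F_t,D_t)$ by $f_{st}=\phi^{-1}\circ h_{st}$ over all of $t\in[0,1]$, while you concentrate the same twist into $[0,\epsilon]$ via an explicit diffeomorphism $\Phi_s\colon M_\phi\to M_{\phi_s}$ and then push everything forward. One small fix: as written $\Phi_s$ is only $C^0$ at $t=\epsilon$---your claim that ``$\phi_0=\phi$ kills the isotopy factor there'' gives continuity but not vanishing $t$-derivatives---so replace the linear reparametrization $1-t/\epsilon$ by a smooth cutoff $\rho(t)$ with $\rho(0)=1$ and $\rho\equiv 0$ near $t\ge\epsilon$ to make $\Phi_s$ a genuine diffeomorphism.
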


\begin{proof}
By Lemma \ref{iso_lm}, there exists a path $h_s$ in $\cD$ such that $h_0=\phi$
and $h_1$ restricts to the identity on a neighborhood of $\prt\Si$ in $\Si$. Set
$f_s=\phi^{-1}\circ h_s$. Let $(j_t, E_t, F_t, D_t)$, with $t\in I$, be any
family of tuples such that $j_t$ is a complex structure on $\Si$, $D_t$ is a
real Cauchy-Riemann operator on $(E_t,F_t)$ over $(\Si, \prt\Si)$, and
$$
(j_1,E_1,F_1, D_1)=(\phi^*j_0,\phi^*E_0, \phi^*F_0,\phi^*D_0).
$$
 For each $s\in I$, let
 $$
 (j_{s;t}, E_{s;t}, F_{s;t}, D_{s;t})=(f_{st}^*j_t, f_{st}^*E_t, f_{st}^*F_t,
f_{st}^*D_t).
 $$
 Since $(j_{s;1}, E_{s;1}, F_{s;1}, D_{s;1})=(h_s^*j_{s;0},
h_s^*E_{s;0},h_s^*F_{s;0},h_s^*D_{s;0})$, this defines  families of real
Cauchy-Riemann operators on the bundle pairs $(E_s, F_s)$ over $M_{h_s}$. Since
$h_0=\phi$, we have thus constructed the desired deformation of the original
family.
\end{proof}

\begin{lem}\label{lift_cor} Every loop $\gamma$ in $\mhp$ lifts to a path  $\widetilde{\gamma}$ in
$\mbp\times\mathcal{J}_\Si$ such that $\widetilde{\gamma}_1=\phi\cdot
\widetilde{\gamma}_0$ for some $\phi\in\mathcal{D}$ with
$\phi_{|\partial\Si}=\id$.
\end{lem}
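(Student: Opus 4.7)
The plan is to produce any continuous lift of $\gm$ to $\mbp\times\cJ_\Si$ and then modify it by a path in $\cD$ so that the resulting monodromy element restricts to the identity on $\prt\Si$.

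First, I would invoke the hypothesis of Remark \ref{mfld_rem}: the action of $\cD$ on $\mbp\times\cJ_\Si$ is free, and the quotient $\mhp$ is a topological manifold. This makes the projection $p\!:\mbp\times\cJ_\Si\ri\mhp$ behave as a principal $\cD$-bundle with local sections, so any continuous loop $\gm\!:[0,1]\ri\mhp$ admits a continuous lift $\wt\gm\!:[0,1]\ri\mbp\times\cJ_\Si$. Since $\gm(0)=\gm(1)$ and $\cD$ acts freely, there is a unique element $\phi\in\cD$ with $\wt\gm_1=\phi\cdot\wt\gm_0$. By the definition of $\cD$, this $\phi$ preserves the orientation and each boundary component of $\Si$, but a priori need not restrict to the identity on $\prt\Si$.

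Next, I would apply Lemma \ref{iso_lm} to choose a path $\phi_s\in\cD$, $s\in[0,1]$, with $\phi_0=\phi$ and $\phi_1$ the identity on a neighborhood of $\prt\Si$. Setting $\psi_s=\phi_s\circ\phi^{-1}\in\cD$ yields a path from $\psi_0=\id$ to $\psi_1=\phi_1\circ\phi^{-1}$. I would then define the modified path
\[
\wt\gm'_t:=\psi_t\cdot\wt\gm_t,\qquad t\in[0,1].
\]
Because $\psi_t\in\cD$ acts trivially on the quotient, $p(\wt\gm'_t)=p(\wt\gm_t)=\gm(t)$, so $\wt\gm'$ is again a lift of $\gm$. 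Its endpoints satisfy $\wt\gm'_0=\wt\gm_0$ and
\[
\wt\gm'_1=\psi_1\cdot\wt\gm_1=(\phi_1\circ\phi^{-1})\cdot(\phi\cdot\wt\gm_0)=\phi_1\cdot\wt\gm'_0,
\]
so the monodromy of $\wt\gm'$ is $\phi_1$, which restricts to the identity on $\prt\Si$ by construction.

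The one genuinely nontrivial input is the existence of the initial lift $\wt\gm$; I expect this to be the only point requiring real justification and would support it by the principal bundle structure of $p$, a direct consequence of the freeness of the $\cD$-action and the manifold structure on $\mhp$ assumed in Remark \ref{mfld_rem}. Once $\wt\gm$ is in hand, the remaining work is the algebraic manipulation above, in which Lemma \ref{iso_lm} plays the essential role by supplying an isotopy from $\phi$ to a boundary-preserving diffeomorphism inside $\cD$.
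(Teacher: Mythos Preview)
Your proof is correct and follows essentially the same approach as the paper: lift $\gm$ using the local slices guaranteed by Remark~\ref{mfld_rem}, then use the isotopy from Lemma~\ref{iso_lm} to modify the lift so that the monodromy restricts to the identity on $\prt\Si$. Your modified lift $\wt\gm'_t=\psi_t\cdot\wt\gm_t=(\phi_t\circ\phi^{-1})\cdot\wt\gm_t$ is exactly the paper's $h_t\cdot\phi^{-1}\cdot\wt\gm_t$ under the identification $h_t=\phi_t$.
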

\begin{proof}
Under the assumption of Remark \ref{mfld_rem}, the projection $$\mbp\times \cJ_\Si\ri \mhp$$ admits local slices. Thus, there exists a path $\wt\gm_t=(u_t,j_t)$ in $\mbp\times\cJ_\Si$
lifting $\gm$.
Let $\phi\in\cD$ be such that $\wt\gm_1=\phi\cdot\wt\gm_0$.
By Lemma~\ref{iso_lm}, there exists a path $h_t$ in $\cD$ such that $h_0=\phi$
and $h_1$ restricts
to the identity on the boundary.
The lift $\wt\gm'_t=h_t\cdot \phi^{-1}\cdot\wt\gm_t$ of $\gm$ then satisfies
$\wt\gm'_1=h_1\cdot\wt\gm'_0$.
\end{proof}

\section{Determinant line bundles over loops} \label{sec:w1}
We begin this section by deducing Theorem \ref{main_thm} from  Propositions
\ref{mainor_prop} and \ref{mainlb_prop} below, which treat two distinct cases of
Theorem \ref{main_thm}. We then verify each of the two propositions for the
trivial mapping cylinder over the disk with an additional assumption on the
Maslov index of the pair $(E,F)$ on each fiber; see Lemmas \ref{mainord_lm} and
\ref{mainlbd_lm}. The full statements of
Propositions \ref{mainor_prop} and \ref{mainlb_prop} are then reduced to  Lemmas
\ref{mainord_lm} and \ref{mainlbd_lm}, respectively. We conclude this section by proving Corollary \ref{rp}.

\begin{prop} \label{mainor_prop}
Let $\Si$ be a smooth oriented bordered surface, $\phi\!:\Si\ri\Si$ be a
diffeomorphism preserving the orientation and each boundary component, and
$(E,F)$ be a bundle pair over $(M_\phi, \partial M_\phi)$ with $F$  orientable.
If  $D$ is any family of real Cauchy-Riemann operators on $(E,F)$, then
\[
 \lr{w_1(\det D), S^1}=
\sum_{i} \blr{w_2(F),(\partial M_\phi)_i}.
\]
\end{prop}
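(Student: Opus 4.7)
The plan is to reduce Proposition \ref{mainor_prop} to the disk case treated by Lemma \ref{mainord_lm}, by degenerating $\Sigma$ so as to isolate a small disk neighborhood of each boundary component. First, using Lemma \ref{def_lm}, I would deform the family so that the underlying diffeomorphism $\phi$ restricts to the identity on a neighborhood of $\partial\Sigma$; such a deformation changes neither side of the claimed identity, since the right-hand side depends only on the topology of $F\to\partial M_\phi$.

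Next, for each boundary component $(\partial\Sigma)_i$ I pick a circle $C_i$ isotopic to it inside the region where $\phi=\id$, and degenerate $\Sigma$ by simultaneously pinching all the $C_i$. By Remark \ref{rem}, the bundle pair $(E,F)$ descends to the resulting nodal surface, and one obtains a family of operators on the nodal fibers. Applying the gluing isomorphism (\ref{dets}) fiberwise over $S^1$ yields
\[
\det(D)\;\cong\;\det(D_0)\otimes\bigotimes_{i=1}^{h}\det(D_i)\otimes\Lambda^{\textnormal{top}}\Bigl(\bigoplus_{j=1}^{h}E_{z_j}\Bigr)\spcheck,
\]
where $D_0$ is the operator on the closed core $\Sigma_0$ (carrying only the complex bundle $E|_{\Sigma_0}$), $D_i$ is the operator on the $i$-th disk piece carrying the bundle pair $(E|_{D_i},F|_{(\partial\Sigma)_i})$, and the $z_j$ are the interior nodes created by the pinches. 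The line bundle $\det(D_0)\to S^1$ is the determinant of a complex-linear Cauchy-Riemann operator on a closed surface, hence orientable, and each $E_{z_j}\to S^1$ is complex, hence orientable. Therefore $\lr{w_1(\det D),S^1}=\sum_{i=1}^{h}\lr{w_1(\det D_i),S^1}$.

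Since $\phi=\id$ near $\partial\Sigma$, each disk piece gives a trivial mapping torus $S^1\times D^2$ with the bundle pair $(E|_{D_i},F|_{(\partial\Sigma)_i})$, so Lemma \ref{mainord_lm} applies (modulo its Maslov-index hypothesis) and computes $\lr{w_1(\det D_i),S^1}=\lr{w_2(F),(\partial M_\phi)_i}$. To dispose of this hypothesis I would degenerate each disk $D_i$ once more by pinching an interior circle, producing a sphere bubble joined to a smaller disk through an interior node. On the sphere bubble the bundle can be twisted by an arbitrary complex line bundle; a suitable twist shifts the Maslov index of the bundle pair on the residual disk into the range covered by Lemma \ref{mainord_lm}, while the additional gluing correction involves only complex fibers and is again invisible to $w_1$.

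The main obstacle I anticipate is carrying out the Maslov-index stabilization coherently in a family over $S^1$ and verifying that every correction fiber produced by the iterated gluing formula is complex (hence orientable). A secondary point is confirming that the descent of $(E,F)$ to the nodal surface in Remark \ref{rem} works in parametrized form; this requires $S^1$-families of trivializations of $E$ near the pinched curves $C_i$, which is precisely what the reduction to $\phi=\id$ near $\partial\Sigma$ in the first step provides.
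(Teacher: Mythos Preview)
Your overall strategy---reduce to $\phi=\id$ near $\partial\Sigma$ via Lemma~\ref{def_lm}, pinch off a disk near each boundary component, discard the complex pieces, and apply Lemma~\ref{mainord_lm} to the disk pieces---is exactly the paper's. The difference lies in how the Maslov-index hypothesis of Lemma~\ref{mainord_lm} is met. Your proposed secondary degeneration is both unnecessary and slightly misconceived: ``twisting the sphere bubble by a line bundle'' would change the bundle pair, not merely its presentation; the genuine freedom is in the choice of trivialization of $E$ along the pinched circle in Remark~\ref{rem}, and that freedom was already available at the \emph{first} pinch along $C_i$.

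The paper exploits this freedom directly and more cleanly, and this is where the orientability of $F$ is actually used. Since $F$ is orientable, it can be trivialized over the cylinder $(\partial\Sigma)_i\times I$ (homotopy equivalent to a circle), and this trivialization induces one on $E=F\oplus JF$ there, which is then extended into a collar. The bundle over $(\partial M_\phi)_i$ is described by a clutching map $g_i\colon(\partial\Sigma)_i\to\SO(n)$ in the $S^1$-direction; because the inclusion $\SO(n)\hookrightarrow\U(n)$ is nullhomotopic, $g_i$ extends to a $\U(n)$-valued map on the collar, which is precisely the parametrized trivialization of $E$ needed for Remark~\ref{rem}. With this choice the disk pieces carry the pair $(\C^n\times D^2,\R^n\times\partial D^2)$ on every fiber (twisted only by $g_i$ in the $S^1$-direction), hence have Maslov index zero fiberwise, and Lemma~\ref{mainord_lm} applies immediately. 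So your anticipated ``main obstacle'' dissolves once you let the trivialization of $F$, rather than an arbitrary trivialization of $E$, drive the descent.
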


\begin{prop} \label{mainlb_prop}
Let $\Si$ be a smooth oriented bordered surface, $\phi\!:\Si\ri\Si$ be a
diffeomorphism preserving the orientation and each boundary component, and
$(E,F)$ be a bundle pair over $(M_\phi, \partial M_\phi)$ with $\dim(F)=1$. For
each boundary component  $(\partial\Si)_i$ of $\Si$, choose a section $\alpha_i$
of $$(\partial M_\phi)_i= M_{\phi_{|(\partial\Si)_i}}\ri S^1.$$ If  $D$ is any
family of real Cauchy-Riemann operators on $(E,F)$, then
\[
 \lr{w_1(\det D), S^1}=\sum_{i} \big(\blr{w_1(F), (\partial \Si)_i}+1\big)
\lr{w_1(F),\alpha_i}.
\]
\end{prop}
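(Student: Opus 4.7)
\medskip

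\noindent\textbf{Proof plan.} The strategy is to reduce the general surface $\Sigma$ to the disk case handled by Lemma~\ref{mainlbd_lm} via a pinching/gluing argument in the mapping torus, then to remove the Maslov-index hypothesis by a stabilization.

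First, by Lemma~\ref{def_lm}, both sides of the claimed formula being invariant under homotopy of the family of Cauchy-Riemann operators, we may assume that $\phi$ restricts to the identity on a neighborhood $U=\coprod_iU_i$ of $\partial\Si$. For each boundary component $(\partial\Si)_i$ choose a simple closed curve $C_i\subset U_i$ parallel to $(\partial\Si)_i$, contained in the region where $\phi=\id$. The curves $C_i$ then sweep out embedded circles in the mapping torus $M_\phi$ trivially fibered over $S^1$, so pinching $\Si$ along $\coprod_iC_i$ extends fiberwise over $S^1$ to give a family of nodal bordered surfaces. Each fiber consists of $h$ disk components $D_i$ (each carrying $(\partial\Si)_i$ and one interior node $z_i$) together with a closed oriented surface $\Si_0$ with $h$ interior marked points. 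By Remark~\ref{rem}, the bundle pair $(E,F)$ descends to this nodal family.

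Applying the gluing isomorphism \eqref{dets} fiberwise yields a canonical isomorphism of line bundles over $S^1$:
\[
\det D\;\cong\;\det D_{\Si_0}\otimes\bigotimes_{i=1}^h\det D_{D_i}\otimes\bigotimes_{i=1}^h\det(E_{z_i})\spcheck.
\]
Because $E|_{\Si_0}$ is a complex line bundle on a closed oriented surface with no totally real boundary condition imposed, the operator $D_{\Si_0}$ is complex linear (up to a zeroth-order term that does not affect orientability) and $\det D_{\Si_0}$ carries a canonical complex orientation. Similarly each $E_{z_i}$ is a complex line bundle over $S^1$, so $\det(E_{z_i})\spcheck$ is orientable. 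Thus
\[
\lr{w_1(\det D),S^1}\;=\;\sum_{i=1}^h\lr{w_1(\det D_{D_i}),S^1},
\]
and the right-hand side of the claimed formula depends only on data supported on $\partial\Si$, so the proposition reduces to the equality
\[
\lr{w_1(\det D_{D_i}),S^1}\;=\;\bigl(\lr{w_1(F),(\partial\Si)_i}+1\bigr)\lr{w_1(F),\alpha_i}
\]
for each disk family $D_{D_i}$ over the trivial mapping torus $D^2\times S^1$.

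Each such disk family is exactly the setup of Lemma~\ref{mainlbd_lm}, except possibly for the additional Maslov-index hypothesis. The parity of the Maslov index on each fiber is forced to equal $\lr{w_1(F),(\partial\Si)_i}$, so at most two cases need matching. The remaining freedom is absorbed by tensoring the disk bundle pair with a standard complex line bundle equipped with a trivialized real boundary condition; such a modification shifts the Maslov index by an even integer, and by \eqref{sum} it multiplies $\det D_{D_i}$ by the determinant line of a Cauchy-Riemann operator on a complex line bundle, which is canonically complex-oriented. Hence the left-hand side is unchanged, while the right-hand side is unchanged because $F|_{(\partial\Si)_i}$ and the section $\alpha_i$ are untouched. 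Lemma~\ref{mainlbd_lm} then applies and finishes the proof.

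\medskip

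\noindent\textbf{Main obstacle.} The formal degeneration/gluing step is straightforward given Lemma~\ref{def_lm} and \eqref{dets}; the genuinely delicate point is the stabilization in the last paragraph. One must verify that the Maslov-adjustment does not alter either side of the claimed identity---the key input being that all corrections introduced by the modification live in the complex-linear sector, where the determinant line bundle has a preferred complex orientation and therefore contributes trivially to $w_1$.
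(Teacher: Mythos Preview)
Your overall architecture matches the paper's: reduce to $\phi$ being the identity near $\partial\Si$ via Lemma~\ref{def_lm}, pinch off a collar of each boundary component to produce a nodal family, apply \eqref{dets}, discard the complex-oriented contributions from the closed component and the nodes, and invoke Lemma~\ref{mainlbd_lm} on each disk. That part is fine.

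The gap is in your final step. The isomorphism \eqref{sum} concerns short exact sequences (equivalently, direct sums) of Fredholm operators, not tensor products of bundle pairs. There is no statement of the form
\[
\det D_{(E\otimes L,\,F\otimes\R)}\;\cong\;\det D_{(E,F)}\otimes\det D_L
\]
available to you from \eqref{sum}, and you have given no other justification for it. So tensoring the disk bundle pair with a line bundle of even Maslov index does not, via \eqref{sum}, exhibit the new determinant line as the old one times a complex-oriented factor. (Taking a direct sum instead would let you cite \eqref{sum}, but then the rank is no longer $1$ and Lemma~\ref{mainlbd_lm} no longer applies.)

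The paper avoids this issue entirely by controlling the Maslov index \emph{at the moment of pinching} rather than correcting it afterwards. When you trivialize $E$ over the collar curve $C_i$ (as in Remark~\ref{rem}), you have a $\pi_1(\U(1))\cong\mathbb{Z}$ worth of choices, and shifting this choice by one unit shifts the Maslov index on the resulting disk by~$2$. The paper simply chooses the trivialization so that the disk bundle pair is the standard model $(\C,\Lambda_i)$ with $\Lambda_{i,e^{\mathfrak{i}\theta}}=e^{\mathfrak{i}\theta m_i/2}\R$ and $m_i\in\{0,1\}$, i.e.\ Maslov index $0$ or~$1$; Lemma~\ref{mainlbd_lm} then applies directly. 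If you prefer to keep your after-the-fact viewpoint, the honest argument is this: two choices of trivialization over $C_i$ yield two different splittings of the \emph{same} line bundle $\det D$ into disk and closed-surface factors via \eqref{dets}; since the closed-surface and node factors are complex-oriented in both splittings, the disk factor has the same $w_1$ in both, and you may therefore pick the trivialization that makes the disk Maslov index non-negative. That argument is correct and never needs to compare determinant lines across a tensor-product modification.
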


\begin{remark}\label{con_rmk}
The space of real Cauchy-Riemann operators  $(E,F)\ri(\Si,\partial \Si)$ is
contractible; thus,  a choice of orientation on one determinant line canonically
induces orientations on the rest.  Moreover, any two families of real
Cauchy-Riemann operators on a family $(E_t,F_t)\ri(\Si_t,\partial\Si_t)$ are
fiberwise homotopic. This implies that their determinant bundles have the same
Stiefel-Whitney class.
\end{remark}

\begin{proof} [{\bf \emph{Proof of Theorem \ref{main_thm}}}]
By Remark \ref{con_rmk}, it is sufficient to prove the result for some family of
real Cauchy-Riemann operators on $(E,F)\ri (M_\phi,\prt M_\phi)$.
A connection on $E$ induces a family of complex linear Cauchy-Riemann operators
$\db{E}{F}$ over $M_\phi$, which on a fiber $\Si_t$ is given by the complex
linear Cauchy-Riemann operator of the restricted connection.   Let
\[ (E^1, F^1)=({\textstyle\det_{\C}E},\det  F), \quad (\wt E,\wt F)=(E\oplus
3E^1, F\oplus 3 F^1 ).\]
The connection on $E$ induces connections on $E^1$, $\wt E$, and $4E^1$ and thus
families of complex linear Cauchy-Riemann operators $\db{\wt E}{\wt F}$,
$\db{E^1}{F^1}$, and $\db{4E^1}{4F^1}$.  By \eqref{sum},
\begin{equation*} \begin{split}
\det(\db{\wt E}{\wt F})\otimes \det(\db{E^1}{F^1})&\cong \det(\db{\wt E\oplus
E^1}{\wt F\oplus F^1})
=\det(\db{E\oplus 4E^1}{F\oplus 4F^1})\\&\cong \det(\db{E}{F})\otimes
\det(\db{4E^1}{4F^1}). \end{split}
\end{equation*}\vspace{0.0001 cm}
Therefore, \begin{gather*}\begin{align*}
w_1(\det(D))&=w_1(\det(\db{E}{F}))\\&=w_1(\det(\db{\wt E}{\wt
F}))+w_1(\det(\db{E^1}{F^1}))+w_1(\det(\db{4E^1}{4F^1})).\end{align*}
\end{gather*}
By Proposition \ref{mainor_prop},
\begin{equation*}
 \begin{split} \lr{w_1(\det(\db{\wt E}{\wt F})), S^1}&=\sum_i \lr{w_2(\wt
F),(\prt M_\phi)_i}=\sum_i \lr{w_2 (F),(\prt M_\phi)_i}, \\
                       \lr{w_1(\det(\db{4E^1}{4F^1})), S^1}&=\sum_i
\lr{w_2(4F^1),(\prt M_\phi)_i} =0.
 \end{split}
\end{equation*}
  By Proposition \ref{mainlb_prop},
  \begin{equation*}
  \begin{split} \lr{w_1(\det(\db{E^1}{F^1})),S^1}&=\sum_i(\lr{w_1(F^1),
(\prt\Si)_i}+1)\lr{w_1(F^1),\alpha_i}\\
                              & =\sum_i(\lr{w_1(F),
(\prt\Si)_i}+1)\lr{w_1(F),\alpha_i}.
   \end{split}
 \end{equation*}
   Combining the last four identities, we obtain Theorem \ref{main_thm}.
\end{proof}
 
\begin{lem}\label{mainord_lm}
Let $(E,F)\ri (D^2,\partial D^2)\times S^1$ be a bundle pair with $F$ orientable
and Maslov index zero on each fiber. If  $D$ is a family of real Cauchy-Riemann
operators on $(E,F)$, then
\[
 \lr{w_1(\det D), S^1}=
 \blr{w_2(F),\partial D^2\times S^1}.
\]
\end{lem}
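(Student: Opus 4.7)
By Remark \ref{con_rmk} it suffices to analyze one convenient family of operators in the fiberwise homotopy class of $D$. First, I would replace $D$ by the family of complex-linear $\bar\partial$-operators induced by a Hermitian connection on $E$. Since $D^2 \times S^1$ deformation-retracts onto $S^1$ and every complex vector bundle on $S^1$ is trivial, $E$ admits a global complex trivialization. Under this trivialization $F$ is an oriented totally real rank-$n$ subbundle of $\underline{\mathbb C}^n$ over $T^2 = \partial D^2 \times S^1$, that is, a continuous map $T^2 \to U(n)/SO(n)$ whose restriction to each meridian $\partial D^2 \times \{t\}$ is null-homotopic by the Maslov-zero hypothesis.

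Such pairs $(E,F)$ are classified up to homotopy of families over $S^1$ by $\langle w_2(F), T^2\rangle \in H^2(T^2;\mathbb Z/2) = \mathbb Z/2$; this uses $\pi_2(U(n)/SO(n)) = \pi_1(SO(n)) = \mathbb Z/2$ for $n \geq 3$, while the low-rank cases are handled separately (orientable real line bundles on $T^2$ are trivial, and in rank $2$ the oriented bundle is determined by its Euler class, whose parity is $w_2$). Because both sides of the identity depend only on the homotopy class of $(E,F)$ (by Remark \ref{con_rmk}), it is enough to verify the identity on one representative model in each class.

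When $w_2(F) = 0$, the model $F \equiv \underline{\mathbb R}^n \subset \underline{\mathbb C}^n$ gives a family of $\bar\partial$-operators independent of $t$, so $\det D$ is canonically trivial and both sides of the identity vanish.

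The non-trivial case $w_2(F) = 1$ is the principal obstacle. I would fix an explicit model realizing this class --- for $n = 2$, e.g., an oriented rank-$2$ bundle on $T^2$ of odd Euler class embedded totally real in $\underline{\mathbb C}^2$ --- and then pinch each fiber $D^2$ along a curve parallel and close to $\partial D^2$ on which the trivialization of $E$ descends, as permitted by Remark \ref{rem}. This produces a family of nodal bordered surfaces whose two components are a closed disk carrying the trivial bundle pair and an annulus carrying all of the non-trivial boundary data. The canonical isomorphism \eqref{dets} identifies $\det D$ with the tensor product of the determinant bundles on the two components, and only the annular factor contributes non-trivially. The annular problem, after a further pinch, reduces to an explicit one-parameter spectral problem for $\bar\partial$ on a cylinder with rotating totally real boundary conditions, whose holonomy I would compute by Fourier analysis and match with $\langle w_2(F), T^2\rangle = 1$. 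The chief technical difficulty is tracking the canonical isomorphisms \eqref{sum} and \eqref{dets} through this chain of degenerations carefully enough to pin down the sign unambiguously at the end.
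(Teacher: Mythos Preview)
Your setup and the treatment of the case $w_2(F)=0$ are essentially the paper's: trivialize $E$, note that $(E,F)$ with $F$ orientable and Maslov index zero is classified by $w_2(F)\in\mathbb{Z}/2$ (after stabilizing to $n\ge 3$), and in the trivial class take the constant family $\bar\partial_0$ on $(\mathbb{C}^n,\mathbb{R}^n)$, whose kernel bundle $\mathbb{R}^n\times S^1$ is visibly orientable.

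The gap is in the case $w_2(F)\neq 0$. Pinching $D^2$ along a circle $C_\epsilon$ parallel to $\partial D^2$ does \emph{not} produce a disk and an annulus: the inner region becomes a sphere $S^2$ (the closed disk with its boundary circle collapsed to the node) and the outer annular region becomes a disk with boundary $\partial D^2$ (the annulus with its inner boundary collapsed to the node). The determinant on the $S^2$-component and the node fiber are complex and hence canonically oriented, so \eqref{dets} gives $w_1(\det D)=w_1(\det D_{D^2})$ where $D_{D^2}$ is the family on the outer disk---which carries exactly the same boundary bundle $F$ you started with. The degeneration is a tautology here and does not reduce the problem; there is no ``annular factor'' on which to run a further pinch or a cylinder spectral computation.

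The paper handles $w_2(F)\neq 0$ differently: after stabilizing to $n>2$ so that $\pi_1(\mathrm{SO}(n))\cong\mathbb{Z}/2$ classifies such pairs, it observes that the non-trivial class is (a stabilization of) the explicit family constructed in \cite[Proposition~8.1.7]{FOOO}, which is shown there to be non-orientable. If you want to avoid citing \cite{FOOO}, the cleanest direct route is to build the pair from the trivial pair on $(D^2,\partial D^2)\times I$ via a clutching map $\tilde g\!:D^2\to\mathrm{U}(n)$ with $\tilde g|_{\partial D^2}\!:\partial D^2\to\mathrm{SO}(n)$ representing the non-trivial element of $\pi_1(\mathrm{SO}(n))$, and then track how the identification of $\ker\bar\partial_0\cong\mathbb{R}^n$ at the two ends of $I$ induced by $\tilde g$ reverses orientation; this is essentially the content of the FOOO argument, and it does not require any nodal degeneration.
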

\begin{proof}
The standard $\bp_0$-operator on $(\C^n,\R^n)\ri (D^2,\partial D^2)$ is
surjective and its kernel consists of constant real-valued sections; see
\cite[Theorem C.1.10]{MS}.  If the bundle pair $(E,F)\ri (D^2,\partial
D^2)\times S^1$ is trivializable, we can consider the constant family of
standard $\bp_0$-operators on a trivialization
$$(E,F)\cong (\C^n\times D^2,\R^n\times \partial D^2)\times S^1.$$
The determinant bundle of this family is isomorphic to $\R^n \times S^1 $  by
evaluation at a boundary point
and in particular is orientable. By Remark \ref{con_rmk}, the determinant bundle
of the family $D$ is also orientable. \\

If $(E',F')\ri(D^2,\partial D^2)\times S^1$ is another bundle pair,
$$\det D_{(E,F)}\otimes \det D_{(E',F')}\cong \det (D_{(E,F)}\oplus
D_{(E',F')})\cong\det{D_{(E\oplus E',F\oplus F')}}$$
by (\ref{sum}).
Thus, we can stabilize $(E,F)$ with a trivial bundle pair and assume that
$n=\dim F > 2$.
Since $\pi_1(SO(n))\cong \mathbb{Z}_2$ and the homomorphism
$\pi_1(SO(n))\ri\pi_1 (U(n))$ induced by the inclusion is trivial, the second
Stiefel-Whitney class $w_2(F)$ then classifies  the bundle pairs $(E,F)$ over
$(D^2,\partial D^2)\times S^1$.
Thus, if $w_2(F)=0$, the bundle pair $(E,F)$ is trivializable and the
determinant bundle $\det(D)$ is orientable. If $w_2(F)\neq 0$, the bundle pair
$(E,F)$ is isomorphic to a stabilization of the bundle pair in \cite[Proposition
8.1.7]{FOOO}, which constructs a non-orientable family of real Cauchy-Riemann
operators. Combining the two cases gives the result.
\end{proof}

\begin{lem}\label{lemma}
If $(E,F)\rightarrow (D^2,\partial D^2)$ is a bundle pair with
$\dim_{\mathbb{C}}E=1$ and Maslov index  $\mu=\mu(E,F)\ge -1$, every real
Cauchy-Riemann operator $D$ on $(E,F)$ is surjective. Moreover,  if $x_1,\dots,
x_{\mu+1}\in \partial D^2$ are distinct points, then the homomorphism
\begin{equation} \label{eval_eq}\ev\!: \Ker(D)\ri \bigoplus_{i=1}^{\mu+1}
F_{x_i}, \qquad \ev(\xi)=(\xi(x_1),\dots, \xi(x_{\mu+1})), \end{equation}
is an isomorphism.
\end{lem}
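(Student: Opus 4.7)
The plan is to match dimensions via Riemann--Roch together with the surjectivity result cited from \cite{MS}, and then prove injectivity of $\ev$ by a zero-counting argument.

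By the Riemann--Roch formula for bundle pairs over the disk, $\ind_\R(D) = \dim_\C(E)\chi(D^2) + \mu = 1 + \mu$. Since $\mu \geq -1$, \cite[Theorem C.1.10]{MS} gives that $D$ is surjective, so $\dim_\R\Ker(D) = \mu + 1$. The target $\bigoplus_{i=1}^{\mu+1} F_{x_i}$ has real dimension $\mu + 1$ as well, so proving $\ev$ is an isomorphism reduces to proving it is injective.

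The case $\mu = -1$ is immediate, as both source and target are zero-dimensional. For $\mu \geq 0$, suppose $\xi \in \Ker(D)$ is nonzero and satisfies $\xi(x_i) = 0$ for $i = 1, \ldots, \mu + 1$. The similarity principle (see \cite[Appendix E]{MS}) ensures that $\xi$ behaves locally like a holomorphic section: at any point of $D^2$ there is a local frame in which $\xi(z) = (z-z_0)^k h(z)$ with $h(z_0)\neq 0$, so the zeros of $\xi$ are isolated with well-defined positive integer multiplicities, and we may form the total multiplicities $m_{\mathrm{int}}(\xi)$ and $m_\partial(\xi)$ of interior and boundary zeros. A standard winding-number computation then yields the identity
\[ 2 m_{\mathrm{int}}(\xi) + m_\partial(\xi) = \mu. \]
Combined with the hypothesis $m_\partial(\xi) \geq \mu + 1$, this forces $\mu + 1 \leq \mu$, a contradiction. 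Thus $\xi \equiv 0$ and $\ev$ is injective, hence an isomorphism.

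The main obstacle is justifying the zero-count identity. After trivializing $E \cong D^2 \times \C$, one writes the totally real subbundle as $F_z = e^{i\alpha(z)}\R$ for a suitable phase $\alpha\!:\partial D^2\to\R/\pi\Z$ whose lift has winding $\mu$, perturbs $\xi|_{\partial D^2}$ slightly off the boundary to a nonvanishing loop in $\C^*$ (via a local holomorphic extension controlled by the similarity principle), and identifies the winding of this perturbed map in two ways: with $m_{\mathrm{int}}(\xi)$ via the argument principle on the interior, and with $\mu/2 + m_\partial(\xi)/2$ via the Maslov phase and the boundary zero contributions. Equating the two gives the displayed identity, from which the lemma follows as above.
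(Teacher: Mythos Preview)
Your argument is correct and takes a genuinely different route from the paper's. The paper first reduces to the explicit model $(\C\times D^2,\Lambda)$ with $\Lambda_{e^{i\theta}}=e^{i\theta\mu/2}\R$ via the classification \cite[Theorem~C.3.5, Corollary~C.3.9]{MS}; there $\Ker\bar\partial_0$ consists of polynomials $a_0+\dots+a_\mu z^\mu$ with $a_i=\bar a_{\mu-i}$, and a degree-$\mu$ polynomial with $\mu+1$ boundary zeros must vanish. For a general operator $D'$ on this model it then invokes the Carleman factorization of \cite[Section~C.4]{MS}: given $\xi\in\Ker D'$ there exists $f\!:(D^2,\partial D^2)\to(\C^*,\R^*)$ with $\bar\partial_0(f\xi)=0$, so the zero set of $\xi$ is that of an honest holomorphic section and the polynomial argument applies. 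Your approach bypasses both the explicit model and the Carleman factorization, controlling zeros directly via the similarity principle and the identity $2m_{\mathrm{int}}+m_\partial=\mu$. This is more intrinsic and would adapt to higher-genus domains, whereas the paper's route is more elementary and self-contained once the model is in hand.

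One small slip in your final paragraph: on a curve pushed slightly \emph{into} the interior, the winding of $\xi$ equals $\mu/2-m_\partial/2$, not $\mu/2+m_\partial/2$, since each inner detour around a boundary zero of order $k$ winds clockwise about it and contributes $-k\pi$ to the change of argument. With this sign the two computations do recover your stated identity $2m_{\mathrm{int}}+m_\partial=\mu$. Alternatively, the identity follows cleanly by doubling $(D^2,\partial D^2)$ to $\mathbb{CP}^1$: the pair $(E,F)$ becomes a degree-$\mu$ line bundle, $\xi$ extends to a section with $\mu$ zeros, and these are exactly the interior zeros counted twice plus the boundary zeros counted once.
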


\begin{proof}
By \cite[Theorem C.3.5 and Corollary C.3.9]{MS}, the bundle pair $(E,F)$ is
isomorphic to $(\C\times D^2, \Lambda)$, where the fiber at
$e^{\mathfrak{i}\theta}\in \partial D^2\cong S^1$ is given by
\[\Lambda_{e^{\mathfrak{i}\theta}}=e^\frac{\mathfrak{i}\theta\mu}{2}\mathbb{R}.
\]
By \cite[Theorem C.1.10]{MS}, the standard Cauchy-Riemann operator
$\bar\partial_0$ on $(\C\times D^2, \Lambda)$  is surjective if $\mu\geq -1$
and thus  $\dim\Ker{\bp_0}=\mu+1$. Moreover, the elements of the kernel are
polynomials $\xi(z)=a_0+\dots + a_\mu z^\mu$ with $a_i=\bar{a}_{\mu-i}$.
The kernel of the homomorphism
\begin{equation}\label{eval} \ev\!: \Ker(\bp_0)\ri \bigoplus_{i=1}^{\mu+1}
\Lambda_{x_i}, \qquad \ev(\xi)=(\xi(x_1),\dots, \xi(x_{\mu+1})),
\end{equation}
consists of polynomials (of degree $\mu$) which vanish at the $\mu+1$ points
$x_i$; therefore, this homomorphism is injective. Since the domain and target
are of the same dimension, the homomorphism (\ref{eval}) is an isomorphism.\\

Let $D'$ be any real Cauchy-Riemann operator on the above bundle pair $(\C\times
D^2,\Lambda)$. By \cite[Theorem C.1.10]{MS}, $D'$  is still surjective
and $\dim \Ker(D')=\mu+1$.  If the homomorphism (\ref{eval}) with $\bp_0$
replaced by $D'$ is not an isomorphism, there exists $\xi\in \Ker(D')-\{0\}$
vanishing at the $\mu+1$ points $x_i$. By \cite[Section C.4]{MS}, there exists
$f\!:(D^2,\partial D^2)\ri(\C^*,\R^*)$ such that $\bp_0(f\xi)=0$. Since $f\xi$
vanishes at the $\mu+1$ points, by the previous paragraph $\xi$ is identically
zero. Thus, the homomorphism (\ref{eval}) with $\bp_0$ replaced by $D'$ is in
fact an isomorphism.\\

An isomorphism $\varphi\!: (E,F)\rightarrow (\C\times D^2, \Lambda)$ induces  a
commutative diagram
\[\begin{CD}
\Ker(D)
@>\ev>> \bigoplus\limits_{i=1}^{\mu+1}F_{x_i}  \\
@V\cong V\varphi V @V\cong V\varphi V\\
\Ker((\varphi^{-1})^*D)@>\ev>\cong> \bigoplus\limits_{i=1}^{\mu+1} \Lambda_{x_i}
\end{CD}\]
where   $(\varphi^{-1})^*D$ is the induced real Cauchy-Riemann operator on
$(\C\times D^2, \Lambda)$. Since three of the maps in the diagram are
isomorphisms, so is the evaluation map (\ref{eval_eq}).
\end{proof}

\begin{lem}\label{mainlbd_lm}
Let $(E,F)\ri (D^2,\partial D^2)\times S^1$ be a bundle pair with $\dim(F)=1$
and a non-negative Maslov index $\mu$. If  $D$ is a family of real Cauchy-Riemann
operators on $(E,F)$ and $x\in\partial D^2$, then
 \[\lr{w_1(\det D), S^1}=\big(\blr{w_1(F), \partial D^2}+1\big)
\lr{w_1(F),x\times S^1}.
\]
\end{lem}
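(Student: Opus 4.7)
The plan is to compute $\det(D)$ explicitly using Lemma \ref{lemma} fiberwise. Since the restriction of $(E,F)$ to each fiber has the same Maslov index $\mu\ge 0$, Lemma \ref{lemma} gives surjectivity of every $D_t$ together with a canonical evaluation isomorphism $\Ker(D_t)\to\bigoplus_{i=1}^{\mu+1}F_{(x_i,t)}$ at any choice of $\mu+1$ distinct points $x_1,\dots,x_{\mu+1}\in\partial D^2$. Taking $x_1=x$ and letting $t$ vary, I expect these pointwise isomorphisms to assemble into a bundle isomorphism $\Ker(D)\cong\bigoplus_{i=1}^{\mu+1}F|_{\{x_i\}\times S^1}$ over $S^1$, because $\Ker(D_t)$ has constant rank $\mu+1$ (so $\Ker(D)$ forms a vector bundle) and pointwise evaluation is continuous in $t$.

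Given this identification, the conclusion is essentially formal. Surjectivity of each $D_t$ gives $\det(D)=\Lambda^{\mu+1}\Ker(D)$, and applying $\Lambda^{\textnormal{top}}(L_1\oplus\cdots\oplus L_{\mu+1})\cong L_1\otimes\cdots\otimes L_{\mu+1}$ for line bundles yields
\[
\det(D)\;\cong\;\bigotimes_{i=1}^{\mu+1}F|_{\{x_i\}\times S^1}.
\]
Each loop $\{x_i\}\times S^1$ is homologous to $\{x\}\times S^1$ in $\partial D^2\times S^1$, so the first Stiefel-Whitney class pairs as
\[
\lr{w_1(\det D),S^1}=(\mu+1)\,\lr{w_1(F),\{x\}\times S^1}\pmod 2.
\]

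To finish, I will invoke the parity identity $\mu\equiv\lr{w_1(F),\partial D^2}\pmod 2$ for line bundle pairs over the disk. This is immediate from the explicit model $\Lambda_{e^{\mathfrak{i}\theta}}=e^{\mathfrak{i}\mu\theta/2}\R$ recorded in Lemma \ref{lemma}: the real line subbundle over $\partial D^2$ is orientable precisely when $\mu$ is even. Substituting $\mu+1\equiv\lr{w_1(F),\partial D^2}+1$ into the displayed equation yields the desired formula.

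The main obstacle I foresee is the assembly step: verifying that the pointwise evaluation isomorphisms fit together into a continuous bundle isomorphism over $S^1$. This should be routine given the constancy of $\dim\Ker(D_t)$ (making $\Ker(D)\to S^1$ a vector bundle in the sense of \cite[Section A.2]{MS}) together with continuity of $\ev_t$ on the relevant Sobolev completion of sections, but it is the one place in the argument where something actually needs to be checked rather than invoked from purely algebraic formalism.
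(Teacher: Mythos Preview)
Your proposal is correct and follows essentially the same approach as the paper: apply Lemma~\ref{lemma} fiberwise to identify $\det(D)$ with $\bigotimes_{i=1}^{\mu+1}F|_{\{x_i\}\times S^1}$, then use the parity $\mu\equiv\lr{w_1(F),\partial D^2}\pmod 2$. The assembly step you flag is indeed routine (constant rank of $\Ker(D_t)$ plus continuity of evaluation), and the paper does not comment on it either.
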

\begin{proof}  By Lemma \ref{lemma}, the  operators $D_t$,  $t
\in S^1,$  are surjective and
\[
\ev\!: \Ker(D_t)\ri \bigoplus_{i=1}^{\mu+1} F_{|x_i\times t}, \qquad
\ev(\xi)=(\xi(x_1),\dots, \xi(x_{\mu+1})),
\]
are isomorphisms for any choice of distinct points $x_1,\dots,
x_{\mu+1}\in\partial D^2$.  Therefore,
 \begin{equation*}\begin{split}
\lr{w_1(\det(D)),S^1}=\lr{w_1\left(\bigoplus_{i=1}^{\mu+1} F_{|x_i\times
t}\right), S^1}&= \sum_{i=1}^{\mu+1} \lr{w_1(F),  x_i\times
S^1)}\\&=(\mu+1)\lr{w_1(F),x_1\times S^1}.
\end{split}
\end{equation*}
Since the Maslov index $\mu$ modulo two is $\lr{w_1(F), \partial D^2}$,
$$(\mu+1)\lr{w_1(F),x_1\times S^1}=\big(\lr{w_1(F), \partial
D^2}+1\big)\lr{w_1(F),x_1\times S^1}, $$
establishing the formula.
\end{proof}

\begin{proof}[{\bf \emph{Proof of Proposition \ref{mainor_prop}}}]
By Lemma \ref{def_lm}, we can assume that $\phi$ restricts to the identity in a
neighborhood of the boundary.
For each boundary component $(\partial\Si)_i$ of $\Si$, let
$$U_i\cong[0,2\eps]\times (\prt\Si)_i\times S^1\equiv \textnormal{Cyl}\times S^1
$$
be a neighborhood of $(\prt\Si)_i\times S^1$ in $M_\phi$. Let $j_0$ be a
standard complex structure on $[0,2\eps]\times (\prt\Si)_i$. 
Since every loop
$\gamma$ of complex structures on the cylinder Cyl is of the form
$j_t=\psi_t^*j_0$ for some loop of diffeomorphisms $\psi_t$, there is an
isomorphism
$$(\textnormal{Cyl}\times S^1; \gamma)\cong (\textnormal{Cyl}\times S^1;
j_0),\quad (x,t;j_t)\mapsto (\psi_t(x),t;(\psi^{-1}_t)^* j_t= j_0).$$
Thus, there is an isomorphism
\begin{equation}\label{ui_eq}(U_i, j_{t|U_i})\cong ([0,2\epsilon]\times
(\prt\Si)_i\times S^1, j_0).\end{equation}

For each $\delta\in [0,2\eps]$, let
$$\wt U_i(\delta)\cong[0,\delta]\times(\partial\Sigma)_i\times I$$
 be the neighborhood of $(\partial\Sigma)_i\times I\subset \Sigma\times I$
corresponding to $[0,\delta]\times(\partial\Sigma)_i\times S^1$ under the
identification (\ref{ui_eq}); for example,  $\wt
U_i(0)\cong\{0\}\times(\partial\Sigma)_i\times I$.
We can trivialize $F$ over $\wt U_i(0)$, since $F$ is orientable and $\wt
U_i(0)$ is homotopic  to a circle.
A trivialization $F_{|\wt U_i(0)}\cong \mathbb{R}^{n}\times \wt U_i(0)$ induces
a trivialization
$$E_{|\wt U_i(0)}\cong F\oplus J F_{|\wt U_i(0)}\cong \mathbb{C}^{n}\times \wt
U_i(0),$$ which we can extend to a trivialization $E_{|\wt U_i(2\epsilon)}\cong
\mathbb{C}^{n}\times \wt U_i(2\epsilon)$.

\begin{figure}[htb]
\begin{center}
\leavevmode
\includegraphics[width=1.0\textwidth]{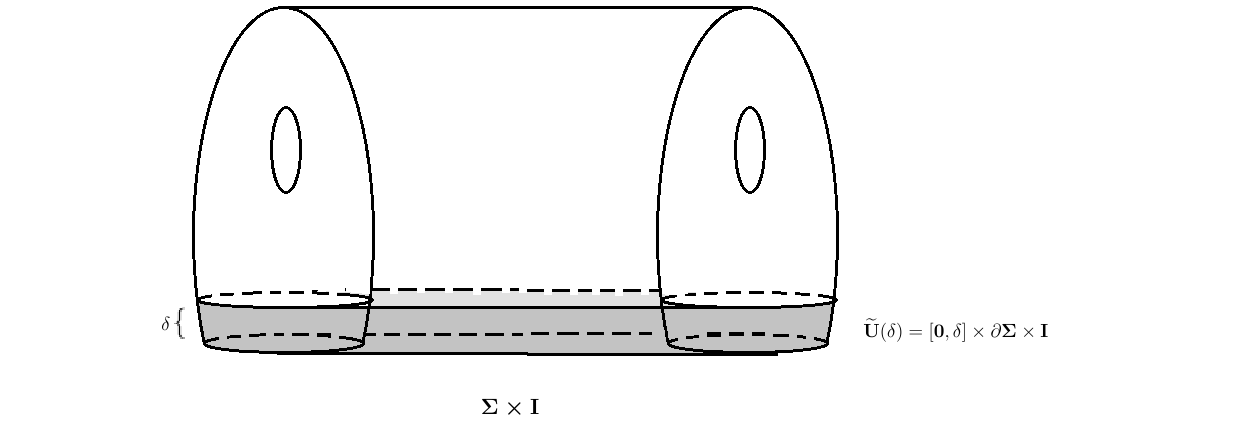}
\end{center}
 \label{fig:def_image}
\end{figure}

At the two endpoints of the interval $I$, glue the trivial bundles
$\mathbb{R}^{n}\times(\partial\Sigma)_i\times {1}$ and
$\mathbb{R}^{n}\times(\partial\Sigma)_i\times {0}$ using a clutching map $g_i\!:
(\partial\Si)_i \ri \SO (n)$ so that the bundle pair
$$(\C^{n}\times (\partial\Si)_i,\mathbb{R}^{n}\times
(\partial\Si)_i)\times_{(g_i,g_i)} I \rightarrow (\partial\Sigma)_i\times S^1$$
  is  isomorphic to $(E_{|(\partial\Si)_i\times S^1},F_{|(\partial\Si)_i\times
S^1})$.   Since the inclusion $\SO(n)\rightarrow \U(n)$ is nullhomotopic, the
map $ g_i$ can be extended to  a map
   $$\wt g_i\!: [0,2\epsilon]\times (\partial\Si)_i\ri \U (n)\qquad
\text{s.t.}\qquad \wt g_{i|[\frac{\epsilon}{2},2\epsilon]}=\id.$$

For every  $t\in S^1$,     pinch $\Sigma\times t$ along the curve
$\epsilon\times (\partial\Sigma)_i\times t$ to obtain a nodal curve $\Sigma^s$
with normalization consisting of a disjoint union of disks $D_i^2$ and a closed
Riemann surface $\hat{\Si},$ with special points $0\in D_i^2$ and $p_i\in
\hat{\Si}$. The bundle pair $(E,F)$ descends to a bundle pair over the family of
nodal curves as in Remark \ref{rem},  inducing bundles
$$
\hat{E}\rightarrow \hat\Si\times S^1\quad \text{and}\quad (E_i,F_i)\equiv
(\mathbb{C}^{n}\times D_i^2, \mathbb{R}^{n}\times (\prt
D^2)_i)\times_{(\widetilde{g}_i,g_i)}I \rightarrow (D^2_i, (\prt D^2)_i)\times
S^1,
$$
 with isomorphisms $\hat{E}_{|p_i\times t}\cong \mathbb{C}^{n} \cong
E_{i|0\times t}$  for every $t\in S^1$. \\

We are interested in the first Stiefel-Whitney class of the family of real
Cauchy-Riemann operators $D_{(E,F)}$. Taking a family of complex linear
Cauchy-Riemann operators $\hat D$ on $\hat{E}$  and gluing it to a family of
real Cauchy-Riemann operators $D_i$ on $(E_i,F_i)$, we obtain   a family of real
Cauchy-Riemann operators $D^{\varepsilon}$ on $(E,F)$. By Remark \ref{con_rmk}
and \eqref{dets},
 \begin{gather*}
 \det(D_{(E,F)})\cong\det(D^\varepsilon)\cong \det(\hat D)\otimes\bigotimes_i
\big( \det(D_i)\otimes \Lambda^{\textnormal{top}}(\hat{E}_{|(p_i,t)})\big)
 \end{gather*}
 and thus
 $$w_1(\det(D))= w_1(\det(\hat
D))+\sum_i\big(w_1(\det(D_i))+w_1(\hat{E}_{p_i\times S^1})\big).$$
The complex structure on the kernels and cokernels of the family of operators
$\hat D$ induces a canonical orientation on $\det(\hat D)$; in particular,
$w_1(\det(\hat D))$ is zero. Moreover, $\hat{E}_{p_i\times S_1}\cong
\mathbb{C}^{n}\times S^1$ also has a canonical orientation and
$w_1(\hat{E}_{p_i\times S_1})$ is   zero. Therefore, the problem reduces to the
families of operators $D_i$ on $(E_i,F_i)$ over  $(D_i^2,(\prt D^2_i))\times
S^1$.   Lemma \ref{mainord_lm}  now gives the result.
\end{proof}

\begin{proof}[{\bf\emph{Proof of Proposition \ref{mainlb_prop}}}]
For each boundary component $(\partial\Si)_i$ of $\Si$, let  $\wt U_i(\delta)$
be as in the proof of Proposition \ref{mainor_prop}. Let $m_i\in \{0,1\}$ be
equal to $0$ if $\lr{w_1(F),(\partial\Sigma)_i}=0$ and $1$ if
$\lr{w_1(F),(\partial\Sigma)_i}=1$. Then there is an isomorphism
$$(E, F)_{|\wt U_i(0)}\cong (\C\times \wt U_i(0), \Lambda_{i}\times I),$$
where the fiber of $\Lambda_i$ at a point $e^{\mathfrak{i}\theta}\times t\in
(\partial\Sigma)_i\times I$ is given by $e^\frac{\mathfrak{i}\theta
m_i}{2}\mathbb{R}\subset \C$. We can extend the trivialization $E_{|\wt
U_i(0)}\cong\C\times \wt U_i(0)$ to the neighborhood $\wt U_i(2\epsilon)$.\\

At the two endpoints of the interval $I$, glue the   bundles $\Lambda_i\times
{1}$ and $\Lambda_i\times {0}$   using a clutching map $g_i\!: (\partial\Si)_i
\ri \{\pm 1\}$ so that the bundle pair
$$(\C\times (\partial\Si)_i,  \Lambda_i)\times_{(g_i,g_i)} I \rightarrow
(\partial\Sigma)_i\times S^1$$
  is  isomorphic to $(E_{|(\partial\Si)_i\times S^1},F_{|(\partial\Si)_i\times
S^1})$.   Since the inclusion $\textnormal{O}(1)\rightarrow \U(1)$ is
nullhomotopic, the map $ g_i$ can be extended to  a map
   $$\wt g_i\!: [0,2\epsilon]\times (\partial\Si)_i\ri \U (1)\qquad
\text{s.t.}\qquad \wt g_{i|[\frac{\epsilon}{2},2\epsilon]}=\id.$$

For every  $t\in S^1$,     pinch $\Sigma\times t$ along the curve
$\epsilon\times (\partial\Sigma)_i\times t$ to obtain a nodal curve $\Sigma^s$
as in the proof of Proposition \ref{mainor_prop}.   The bundle pair $(E,F)$
descends to a bundle pair over the family of nodal curves as in Remark
\ref{rem},  inducing bundles
$$
\hat{E}\rightarrow \hat\Si\times S^1\quad \text{and}\quad (E_i,F_i)\equiv
(\mathbb{C}\times D_i^2, \Lambda_i)\times_{(\widetilde{g}_i,g_i)}I \rightarrow
(D^2_i, (\prt D^2)_i)\times S^1,
$$
 with isomorphisms $\hat{E}_{|p_i\times t}\cong \mathbb{C} \cong E_{i|0\times
t}$  for every $t\in S^1$. \\

 Taking a family of complex linear Cauchy-Riemann operators $\hat D$ on
$\hat{E}$  and gluing it to a family of real Cauchy-Riemann operators $D_i$ on
$(E_i,F_i)$, we obtain   a family of real Cauchy-Riemann operators
$D^{\varepsilon}$ on $(E,F)$. By Remark \ref{con_rmk} and \eqref{dets},
 \begin{gather*}
 \det(D_{(E,F)})\cong\det(D^\varepsilon)\cong \det(\hat D)\otimes\bigotimes_i
\big( \det(D_i)\otimes \Lambda^{\max}(\hat{E}_{|(p_i,t)})\big)
 \end{gather*}
 and thus
 $$w_1(\det(D))= w_1(\det(\hat
D))+\sum_i\big(w_1(\det(D_i))+w_1(\hat{E}_{p_i\times S^1})\big).$$
As in the proof of Proposition \ref{mainor_prop}, $w_1(\det(\hat D))$ and
$w_1(\hat E_{p_i\times S^1})$ vanish.  Thus, the problem reduces to
the families of operators $D_i$ on $(E_i,F_i)$ over  $(D_i^2,(\prt D^2_i))\times
S^1$.   Lemma \ref{mainlbd_lm}  now gives the result.
\end{proof}

\begin{proof}[{\bf \emph{Proof of Corollary \ref{rp}}}] Let $\beta_i$ be as in Corollary \ref{orient_cor}. The sum  $\sum \beta_i\in H_2(L)$ is the boundary of the class
$S\in H_3(M,L)$ obtained by following the image in $(M,L)$ of the whole surface
along the loop $\gamma$, that is $\sum\beta_i =\partial S$. 
Let
\[\dots\ri H^2(M;\mathbb{Z}_2)\overset{i^*}\longrightarrow
H^2(L;\mathbb{Z}_2)\overset{\delta}\longrightarrow H^3(M,L;\mathbb{Z}_2)\ri\dots\]
be the exact sequence for the pair $(M,L)$.  
Since  $w_2(F)+w_1^2(F)$ or $w_2(F)$  is in
the image of $i^*$, $\delta(w_2(F)+w_1^2(F))=0$ or $\delta(w_2(F))=0$. Since  $\beta_i$ is the class of a torus,
$\lr{w_1^2(F), \beta_i}=0$. Thus,
$$0=\lr{\delta(w_2(F)+w_1^2(F)),\left[S\right]}=\lr{\delta(w_2(F)), \left[S\right]} + \lr{w_1^2(F),
\sum\beta_i}=\lr{w_2(F), \sum\beta_i}.$$  
The formula  (\ref{orient_eq}) thus reduces to (\ref{pin_eq}).
\end{proof}

\section{Local systems of determinant line bundles}\label{sec:ls}

In this section we recall the basics of local systems following  \cite{Ste} and  construct a local system
$\mathcal{Z}_{(w_1,w_2)}^F$ over the $h$-fold  product of the Lagrangian $L$ and its
free loop space $\mathcal{L}(L)$.\footnote{Recall that $h$ is the
number of boundary components of $\Sigma$.} We then show its
pull-back  is canonically isomorphic to the local system twisted by the first
Stiefel-Whitney class of $\det(D_{(E,F)})$. We conclude  this section by establishing Corollaries  \ref{ls_cor} and \ref{some_cor}.

\begin{definition} A \textsf{system of local groups} $\mathcal{G}$ on a path-connected
topological space $L$ consists of
\begin{itemize}
  \item a group $G_x$ for every $x\in L$ and
  \item a group isomorphism
$\alpha_{xy}:G_x\rightarrow G_y$  for every homotopy class $\alpha_{xy}$ of
paths from $x$ to $y$
\end{itemize}
such that
 the composition $\beta_{yz}\circ\alpha_{xy}$ is the isomorphism corresponding
to the path $\alpha_{xy}\beta_{yz}$. \end{definition}

\begin{lem}[{\cite[Theorem 1]{Ste}}]\label{uniq} Suppose $p_0\!\in\! L$, $G$ is  a
group, and $\psi\!:\!\pi_1(L,p_0)\!\rightarrow\! \aut (G)$ is
a group homomorphism. Then there is a unique system
$\mathcal{G}=\{G_x\}$ of local groups on $L$ such that $G_0=G$ and the
operations of $\pi_1(L,p_0)$ on $G_0$ are those
determined by~$\psi$.
\end{lem}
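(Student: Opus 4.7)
The plan is to construct $\mathcal{G}$ explicitly by transporting $G$ along fixed reference paths, then to verify both that the resulting system has the required properties and that any other such system is canonically isomorphic to it.

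First, I would choose for each $x\in L$ a homotopy class of paths $\gamma_x$ from $p_0$ to $x$, taking $\gamma_{p_0}$ to be the constant path. Set $G_x:=G$ for every $x\in L$. For a homotopy class $\alpha_{xy}$ of paths from $x$ to $y$, the concatenation $\gamma_x\cdot\alpha_{xy}\cdot\gamma_y^{-1}$ is a loop at $p_0$, and I define the transport isomorphism by
$$\alpha_{xy}:=\psi\bigl([\gamma_x\cdot\alpha_{xy}\cdot\gamma_y^{-1}]\bigr)\in\aut(G).$$
This is a group isomorphism because $\psi$ takes values in $\aut(G)$. For a second homotopy class $\beta_{yz}$ from $y$ to $z$, the composition reads
$$\beta_{yz}\circ\alpha_{xy}=\psi\bigl([\gamma_y\beta_{yz}\gamma_z^{-1}]\bigr)\circ\psi\bigl([\gamma_x\alpha_{xy}\gamma_y^{-1}]\bigr)=\psi\bigl([\gamma_x\cdot\alpha_{xy}\cdot\beta_{yz}\cdot\gamma_z^{-1}]\bigr),$$
which is precisely the transport assigned to the concatenation $\alpha_{xy}\cdot\beta_{yz}$. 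Since $\gamma_{p_0}$ is constant, loops at $p_0$ act on $G_{p_0}=G$ exactly by $\psi$, so this system satisfies both hypotheses.

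For uniqueness, suppose $\mathcal{G}'=\{G'_x\}$ is another system with $G'_{p_0}=G$ and with $\pi_1(L,p_0)$ acting on $G'_{p_0}$ by $\psi$. Define $\Phi_x\!:G_x\to G'_x$ to be the transport in $\mathcal{G}'$ along $\gamma_x$, composed with the identity $G_x=G=G'_{p_0}$. The resulting collection $\Phi=\{\Phi_x\}$ is an isomorphism of local systems: for any homotopy class $\alpha_{xy}$ from $x$ to $y$, the loop $\mu:=\gamma_x\cdot\alpha_{xy}\cdot\gamma_y^{-1}$ acts on $G'_{p_0}$ by $\psi([\mu])$ by hypothesis, so the $\mathcal{G}'$-transport along $\alpha_{xy}$ equals $(\gamma_y)_*\circ\psi([\mu])\circ(\gamma_x^{-1})_*$, which is exactly $\Phi_y\circ\alpha_{xy}\circ\Phi_x^{-1}$ by our definition of the $\mathcal{G}$-transport. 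The only subtle point is verifying that $\Phi_{p_0}=\id_G$, which follows from $\gamma_{p_0}$ being constant; all other verifications are formal manipulations of the composition law for path transport together with the functoriality of~$\psi$.
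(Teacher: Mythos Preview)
The paper does not give its own proof of this lemma; it simply quotes it from Steenrod \cite{Ste}. Your argument is the standard construction one finds there---fix reference paths from the basepoint, declare $G_x=G$, define transport via $\psi$ applied to the conjugating loop, and verify composition and uniqueness up to isomorphism---and it is correct.

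One small remark: in the displayed line verifying the composition law, the equality
\[
\psi\bigl([\gamma_y\beta_{yz}\gamma_z^{-1}]\bigr)\circ\psi\bigl([\gamma_x\alpha_{xy}\gamma_y^{-1}]\bigr)=\psi\bigl([\gamma_x\alpha_{xy}\beta_{yz}\gamma_z^{-1}]\bigr)
\]
tacitly assumes a particular convention relating concatenation in $\pi_1$ to composition in $\aut(G)$ (so that $\psi$ is effectively an anti-homomorphism, or equivalently the fundamental group is taken with reversed concatenation). This is harmless once a convention is fixed, and in the paper's applications $\aut(\mathbb{Z})=\mathbb{Z}_2$ is abelian so the point is moot, but you may want to state your convention explicitly.
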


Two local system $\mathcal{G}$ and $\mathcal{G}'$ on $L$ are isomorphic if for
every point $x\in L$ there is an isomorphism $h_x: G_x\cong G_x'$ such that $h_x=\alpha_{xy}^{-1} h_y
\alpha_{xy}$ for every path
$\alpha_{xy}$  between $x$ and $y$. Equivalently, two local system are isomorphic if the groups $G$
and $G'$ are isomorphic and the induced actions of $\pi_1(L,x_0)$ are the same.
There are $\aut(G)$  of such isomorphisms, and one is fixed by a choice of an
isomorphism $G_{x_0}\cong G'_{x_0}$.

\begin{definition}
Let $f\!:(L_1, p_1)\rightarrow (L_2,p_2)$ be a continuous map between
path-connected topological spaces  and let $\mathcal{G}=\{G_x\}$ be the local
system on $L_2$ induced by $\psi: \pi_1(L_2,p_2)\rightarrow \aut(G)$. The
\textsf{pull-back system} $\mathcal{G}'=f^*\mathcal{G}$ is the system of local
groups induced by $f_{\#}\circ \psi: \pi_1(L_1, p_1)\rightarrow \aut(G)$, where
$f_{\#}: \pi_1(L_1,p_1)\rightarrow \pi_1(L_2,p_2)$.
\end{definition}

\begin{definition} The \textsf{local system of orientations} for a vector bundle
$F\ri L$ is the system
induced  by the homomorphism
$\psi:\pi_1(L,p_0)\rightarrow \aut(\mathbb{Z})=\mathbb{Z}_2$ assigning to
$\alpha\in\pi_1(L,p_0)$ the value of $\lr{w_1(F),\alpha}$.
We denote this system by $\mathcal{Z}_{w_1(F)}$.
\end{definition}

\begin{remark} If $L$ is a smooth manifold, the local system of orientations for
$TL$, $\mathcal{Z}_{w_1(TL)}$,  is called the system of twisted integer
coefficients in \cite{Ste}. \end{remark}

Given a vector bundle $F\ri L$,
we now construct
a local system $\mathcal{Z}^F_{(w_1,w_2)}$ on the $h$-fold product of the
Lagrangian and its free loop space $\lL$, which traces the twisting coming from
the right-hand side of (\ref{orient_eq}).
 We begin by constructing a system over every component of
$L\times\mathcal{L}(L)$ and thus define a system over $L\times\mathcal{L}(L)$.
 We then  pull-back $h$ copies of it to the product $(L\times\mathcal{L}(L))^h$
via the projection maps. The system $\mathcal{Z}^F_{(w_1,w_2)}$ is defined as
the tensor product of the pulled-back systems.\\

Let $p_i\times\gamma_j$ be a basepoint for a component $L_i\times
\mathcal{L}(L)_j\subset L\times\lL$. Define a local system over $L_i\times
\lL_j$ using the homomorphism
\begin{gather*}
\psi:\pi_1(L_i\times\lL_j,
p_i\times\gamma_j)=\pi_1(L_i,p_i)\times\pi_1(\lL_j,\gamma_j)\rightarrow
\aut(\mathbb{Z})=\mathbb{Z}_2,\\
(\alpha,\beta)\mapsto (\lr{w_1(F), \gamma_j}+1) \lr{w_1(F),
\alpha}+\lr{w_2(F),\beta}.
\end{gather*}
 Thus, the system $\mathcal{Z}^F_{(w_1,w_2)}$ over a component of
$(L\times\lL)^h$ with a basepoint $(\vec{p},\vec{\gamma})=(p_1,\gamma_1,..\,
,p_h,\gamma_h)$ is given by the homomorphism
\begin{gather}
\psi: \pi_1((L\times\lL)^h, (\vec{p},\vec{\gamma}))\rightarrow
\aut(\mathbb{Z})=\mathbb{Z}_2, \notag\\ \label{zf_eq}
(\alpha_1,\beta_1,\dots,\alpha_h,\beta_h)\mapsto \sum_{i=1}^h (\lr{w_1(F), \gamma_i}+1)\lr{w_1(F),
\alpha_i}+\sum_{i=1}^h \lr{w_2(F),\beta_i}.
\end{gather}

We next construct a natural isomorphism between the local system
$\mathcal{Z}_{w_1(\det D_{(E,F)})}$ on $\mhp$  and a pull-back/push-down of
$\tz{(w_1,w_2)}^F$.

\begin{prop} \label{evmap_prop}Suppose there is at least one boundary marked point on each boundary
component of $\Si$. Then there is a map $\ev: \mhp\ri (L\times \lL)^h$ such that
for every bundle pair $(E,F)\ri (M,L)$ the local system
$\tz{w_1(\det(D_{(E,F)}))}$ is isomorphic to the pulled-back system
$\ev^*\tz{(w_1,w_2)}^F$.\end{prop}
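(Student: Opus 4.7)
The plan is to construct $\ev$ using the fixed boundary marked points and then to match the two local systems by comparing, via Lemma~\ref{uniq}, the homomorphisms $\pi_1(\mhp)\to\mathbb{Z}_2$ they determine directly with Corollary~\ref{orient_cor}. For each boundary component $(\prt\Si)_i$, fix the first marked point $x_{i,1}\in\mathbf{x}_i$ (which exists by the hypothesis $k_i\geq 1$) and define
$$\wt\ev\!:\mbp\times\cJ_\Si\ri(L\times\lL)^h,\qquad (u,\mathbf{z},\mathbf{x}_1,\dots,\mathbf{x}_h;j)\longmapsto\bigl(u(x_{i,1}),\,u|_{(\prt\Si)_i}\bigr)_{i=1}^h.$$
Since every element of $\cD$ fixes each boundary component setwise and each marked point on it, $\wt\ev$ is $\cD$-invariant and descends to $\ev\!:\mhp\ri(L\times\lL)^h$; two choices of marked point on the same boundary component differ by a homotopy along $(\prt\Si)_i$, so $\ev$ is canonical up to homotopy.

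By Lemma~\ref{uniq}, to establish $\tz{w_1(\det D_{(E,F)})}\cong\ev^*\tz{(w_1,w_2)}^F$ it suffices, on each connected component of $\mhp$, to check that the induced homomorphisms $\pi_1(\mhp,\star)\ri\aut(\mathbb{Z})=\mathbb{Z}_2$ agree on every loop. Let $\gm$ be a loop in $\mhp$ based at $[u_0,\mathbf{z}_0,\mathbf{x}_{1,0},\dots]$. By Lemma~\ref{lift_cor}, lift $\gm$ to a path $\wt\gm_t=(u_t,\mathbf{z}_t,\mathbf{x}_{\cdot,t};j_t)$ in $\mbp\times\cJ_\Si$ with $\wt\gm_1=\phi\cdot\wt\gm_0$ for some $\phi\in\cD$ satisfying $\phi_{|\prt\Si}=\id$. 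Because $\phi$ fixes $x_{i,1}$ and $(\prt\Si)_i$ pointwise, the $i$-th factor of $\ev\circ\gm$ is a closed loop $(\alpha_i,\beta_i)$ in $L\times\lL$ based at $(u_0(x_{i,1}),u_0|_{(\prt\Si)_i})$ with $\alpha_i(t)=u_t(x_{i,1})$ and $\beta_i(t)=u_t|_{(\prt\Si)_i}$ --- precisely the loops appearing in Corollary~\ref{orient_cor}. Moreover, the basepoint $u_0|_{(\prt\Si)_i}$ of the $i$-th $\lL$-factor represents $b_i\in H_1(L)$ by the definition of $\mbp$.

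Substituting into \eqref{zf_eq} at this basepoint gives
$$\psi\bigl(\ev_*\gm\bigr)=\sum_{i=1}^h\bigl(\lr{w_1(F),b_i}+1\bigr)\lr{w_1(F),\alpha_i}+\sum_{i=1}^h\lr{w_2(F),\beta_i},$$
which by Corollary~\ref{orient_cor} equals $\lr{w_1(\det D_{(E,F)}),\gm}$. The two homomorphisms thus agree on every loop, and Lemma~\ref{uniq} yields the desired isomorphism of local systems.

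The main obstacle is essentially bookkeeping rather than a genuine difficulty: verifying that $\wt\ev$ descends to the quotient, and that the lift furnished by Lemma~\ref{lift_cor} produces loops in $L$ and $\lL$ matching the $\alpha_i$ and $\beta_i$ in Corollary~\ref{orient_cor} (for which the condition $\phi_{|\prt\Si}=\id$ is essential, as it is what forces the paths to close up). Once this dictionary between loops in $\mhp$ and pairs $(\alpha_i,\beta_i)_i$ is in place, the proposition is a direct consequence of Corollary~\ref{orient_cor} and the uniqueness statement of Lemma~\ref{uniq}.
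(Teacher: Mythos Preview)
There is a genuine gap in your construction of $\ev$: the map $\wt\ev$ is \emph{not} $\cD$-invariant on the $\lL$-factors, so it does not descend to $\mhp$. The group $\cD$ consists of all orientation-preserving diffeomorphisms of $\Si$ preserving each boundary component \emph{setwise}; it is defined independently of the marked points and acts on them (so your assertion that $\cD$ ``fixes each marked point'' is incorrect, though the evaluation $u(x_{i,1})$ is indeed invariant because the marked point moves together with $u$). The problem is the loop factor: if $\phi\in\cD$ acts on $(u,\ldots)$, the boundary restriction becomes $u|_{(\prt\Si)_i}\circ(\phi|_{(\prt\Si)_i})^{-1}$, which is a reparametrization of the original loop and hence a \emph{different point} of $\lL=\textnormal{Map}(S^1,L)$.

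The paper handles this by introducing the subgroups $\cD_b\subset\cD_{x_1}\subset\cD$, where $\cD_b$ consists of diffeomorphisms restricting to the identity on $\prt\Si$ and $\cD_{x_1}$ of those fixing one chosen point $x_{1,i}$ on each boundary component. One has $\mhp\cong(\mathfrak{B}_{l,\mathbf{k-1}}\times\cJ_\Si)/\cD_{x_1}$; the boundary evaluation $e_i\!:[u,\ldots]\mapsto u|_{(\prt\Si)_i}$ \emph{is} well-defined on the $\cD_b$-quotient; and since $\cD_{x_1}/\cD_b$ is contractible, the projection from the $\cD_b$-quotient to the $\cD_{x_1}$-quotient admits a section $s$, unique up to homotopy. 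The map to the $i$-th $\lL$-factor is then $e_i\circ s$. Once $\ev$ is constructed in this way, the remainder of your argument --- lifting loops via Lemma~\ref{lift_cor} and matching the two $\pi_1$-actions through Corollary~\ref{orient_cor} and the definition~\eqref{zf_eq} --- coincides with the paper's proof.
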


\begin{proof} The map $\ev$ to the $i$-th $L$ factor is given by the evaluation at the
first marked point on the $i$-th boundary component. We now construct the map to
the $\lL$ factors. Denote by $\mathcal{D}_b$ and $\cD_{x_1}$ the groups
of  orientation-preserving diffeomorphisms   of $\Si$
that restrict to the identity on $\prt\Si$ and fix a point $x_{1,i}$ on each
component of $\prt\Si$, respectively.
The group $\cD_b$ is  a normal subgroup of $\mathcal{D}_{x_1}$, and the quotient $\mathcal{D}_{x_1}/\mathcal{D}_{b}$
is contractible. Thus,
$$(\mbp\times\mathcal{J}_\Si)/\mathcal{D}_b\ri
(\mbp\times\mathcal{J}_\Si)/\mathcal{D}_{x_1}
$$
has a contractible fiber and we can
choose a section $s$. Any two such sections are homotopic. Since the elements of
$\mathcal{D}_b$ fix the boundary of $\Si$ pointwise, there is  a map
$$e_i\!: (\mbp\times\mathcal{J}_\Si)/\mathcal{D}_b\ri \lL,\quad
[u,{\bf z},{\bf x_1},\dots,{\bf x_h}]\mapsto u_{|(\prt\Si)_i}.$$
The
map  to the $i$-th $\lL$ factor $\mhp$
is the
composition $$\ev_i\!:\mhp\cong (\mathfrak{B}_{l,{\bf k-1}}(\Si,{\bf
b})\times\mathcal{J}_\Si)/\mathcal{D}_{x_1}\overset{e_i\circ s
}{\longrightarrow}\lL.$$

We restrict ourselves to a particular connected component. Let $u_0\in \mhp$ map under
$\ev$ to the basepoint  $\vec{p}\times \vec{\gamma}\in (L\times {\lL})^h$.
It is enough to show that the action of $\pi_1(\mhp,u_0)$ on the group
$\mathbb{Z}_{u_0}$ induced by the system $\tz{w_1(\det(D_{(E,F)}))}$ is the same
as the one induced by the pulled-back system $\ev^*\tz{(w_1,w_2)}^F$.\\

By definition, the action induced by $\tz{w_1(\det(D_{(E,F)}))}$ is given by
$\lr{w_1(\det(D_{(E,F)})),\gamma}$ for $\gamma\in\pi_1(\mhp,u_0)$. By Corollary
\ref{orient_cor},
\[
  \lr{w_1(\det D_{(E,F)}),\gamma}=\sum_{i=1}^h \big(\blr{w_1(F), b_i}+1\big)
\lr{w_1(F),\alpha_i}+
\sum_{i=1}^h \blr{w_2(F),\beta_i},\]
where $\beta_i$ is the torus in $L$ traced by the $i$-th boundary $(\prt\Si)_i$
 and  $\alpha_i$ is the loop in $L$ traced by the boundary marked point
$x_{1,i}$.\\

The action of $\gamma\in\pi_1(\mhp,u_0)$ induced by the pull-back system is by
definition the action of the image $\widetilde{\gamma}\in \pi_1( (L\times
{\lL})^h,(\vec{p}_0,\vec{\gamma}_0))$  of $\gamma$ under the   map $\ev$. By
construction, it is given by
 \[ \sum_{i=1}^h (\lr{w_1(F), \gamma_i}+1)\lr{w_1(F),
\alpha_i}+\sum_{i=1}^h \lr{w_2(F),\beta_i} \]
This shows the two actions are the same.
\end{proof}

\begin{lem}\label{push_lm}
Let $\mathfrak{f}:\mhp\ri\mathcal{H}^{g,h}_{l,{\bf 0}}(M,L,{\bf b})$ be the map forgetting the boundary marked points. The system $\ev^*\mathcal{Z}^F_{(w_1,w_2)}$ pushes down to a system $$\wt\ev^*\mathcal{Z}_{(w_1,w_2)}^F=\mathfrak{f}_*\circ\ev^*\mathcal{Z}_{(w_1,w_2)}^F$$ over $\mathcal{H}^{g,h}_{l,{\bf 0}}(M,L,{\bf b})$ isomorphic to $\mathcal{Z}_{w_1(\det D_{(E,F)})}$.
\end{lem}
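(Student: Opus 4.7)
The plan is to analyze the monodromy of $\ev^*\tz{(w_1,w_2)}^F$ along loops lying in the fibers of $\mathfrak{f}$, show it is trivial, and then invoke uniqueness of descent together with Proposition \ref{evmap_prop}.

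First, I would describe the fibers of $\mathfrak{f}\!:\mhp\ri \mathcal{H}^{g,h}_{l,{\bf 0}}(M,L,{\bf b})$. A fiber over a fixed class $[u,{\bf z}]$ is a product over $i=1,\dots,h$ of the ordered configuration space of $k_i$ distinct points on $(\prt\Si)_i\cong S^1$, with $u$, the complex structure on $\Si$, and the interior marked points all held fixed. Along any loop $\ell$ in such a fiber, the map $u$ does not change. Setting $\gm_i:=u_{|(\prt\Si)_i}$, under the map $\ev$ of Proposition \ref{evmap_prop} the $i$-th $\lL$-coordinate of the image loop is constant at $\gm_i$, while the $i$-th $L$-coordinate (evaluation at $x_{1,i}$) traces a loop in $L$ homotopic to $n_i\cdot\gm_i$, where $n_i\in\mathbb{Z}$ is the winding number of the path of $x_{1,i}$ around $(\prt\Si)_i$.

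Substituting this into the defining homomorphism (\ref{zf_eq}) of $\tz{(w_1,w_2)}^F$ gives
\[
\lr{w_1(\ev^*\tz{(w_1,w_2)}^F),\ell}=\sum_{i=1}^h n_i\,\lr{w_1(F),\gm_i}\big(\lr{w_1(F),\gm_i}+1\big)\equiv 0\pmod 2,
\]
since $a(a+1)$ is even for every integer $a$. Hence $\ev^*\tz{(w_1,w_2)}^F$ is trivial on every loop in every fiber of $\mathfrak{f}$, and since the fibers are connected the system descends to a well-defined local system $\mathfrak{f}_*\ev^*\tz{(w_1,w_2)}^F$ on the base.

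For the identification with $\tz{w_1(\det D_{(E,F)})}$, observe that $D_{(E,F)}$ and its determinant line bundle depend only on $u$ and on the complex structure on $\Si$, not on the boundary marked points. Hence on $\mhp$ the bundle $\det D_{(E,F)}$ is the $\mathfrak{f}$-pull-back of the corresponding line bundle on the base, and the same holds for its orientation system. Proposition \ref{evmap_prop} supplies a canonical isomorphism $\ev^*\tz{(w_1,w_2)}^F\cong\tz{w_1(\det D_{(E,F)})}$ on $\mhp$; since the fibers of $\mathfrak{f}$ are connected, uniqueness of descent then yields $\mathfrak{f}_*\ev^*\tz{(w_1,w_2)}^F\cong\tz{w_1(\det D_{(E,F)})}$ on the base. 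The hard part will be justifying the winding-number description of the $L$-coordinate of $\ev\circ\ell$ for all loops $\ell$ (including potential braid-type generators when some $k_i>1$), but the cancellation $a(a+1)\equiv 0\pmod 2$ is insensitive to the particular choice of generators, so any careful accounting will do.
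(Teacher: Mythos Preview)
Your argument is essentially the same as the paper's: both show the system is trivial on fiber loops via the computation $\lr{w_1(F),\gm_i}\big(\lr{w_1(F),\gm_i}+1\big)\equiv 0\pmod 2$, and both then invoke Proposition~\ref{evmap_prop} together with connectivity of the fibers (equivalently, surjectivity of $\mathfrak{f}$ on $\pi_1$) to identify the push-down with $\tz{w_1(\det D_{(E,F)})}$. The paper proceeds one boundary component at a time (so each fiber is homotopic to $S^1$), while you treat all components at once via configuration spaces, but this is cosmetic. One small point: your claim that the $i$-th $\lL$-coordinate is literally \emph{constant} at $\gm_i$ is slightly stronger than what the construction of $\ev$ guarantees---the paper only asserts it is the same loop up to reparametrization, so the resulting torus $\beta_i$ is degenerate---but this still forces $\lr{w_2(F),\beta_i}=0$, so your conclusion stands.
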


\begin{proof} It is enough to show that the system pushes down under the map forgetting the boundary points on a particular boundary component $(\prt\Si)_i$. Since the fiber of the forgetful map is connected,  we   need to show only that the system is trivial along the fiber. The fiber is homotopic to $S^1$. Let $\gamma$ be a loop in the fiber. Its image under the map $\ev$ is a degenerate torus, since the image of any point in the fiber is the same loop in $L$ up to reparametrization. 
 Thus, the  $w_2(F)$ term in (\ref{zf_eq}) vanishes. The image that a point on the
boundary traces along the loop is the boundary itself, and therefore the
remaining term in (\ref{zf_eq})   becomes 
$$(\lr{w_1(F), b_i}
+1)\lr{w_1(F), b_i}\equiv 0.$$Thus, the system is trivial along the fiber.\\

 Since $\pi_1(\mhp)$ surjects on $\pi_1( \mathcal{H}^{g,h}_{l,{\bf 0}}(M,L,{\bf b}) )$,  every loop $\gamma$  in $ \mathcal{H}^{g,h}_{l,{\bf 0}}(M,L,{\bf b})$ lifts to a loop $\wt\gamma$ in $\mhp$ and 
$$
\lr{w_1(\det(D_{(E,F)})),\gamma}=\lr{w_1(\det(D_{(E,F)})),\wt\gamma}.
$$
  By Proposition \ref{evmap_prop}, the induced action of $\mathcal{Z}_{w_1(\det(D_{(E,F)}))}$ on $\pi_1(\mhp)$ is the same as the one induced by $\ev^*\mathcal{Z}^F_{(w_1, w_2)}$. Thus,  the local systems $\wt\ev^*\mathcal{Z}^F_{(w_1,w_2)}$ and   $\mathcal{Z}_{w_1(
\det D_{(E,F)})}$ are isomorphic.
\end{proof}

In order to describe an isomorphism between the local systems of Proposition \ref{evmap_prop}, we   choose a trivialization of the determinant
line  $\det(D_{(E,F)})$ over $u_0$. This fixes the group $\mathbb{Z}$ at
$u_0$ and thus an isomorphism between the two systems.

\begin{prop} \label{bp_prop} A trivialization of $\det(D_{(E,F)})_{|u_0}$ is induced by
trivializations of $F^1=\det(F)$ over $u_0(x_{1,i})$ for some
$x_{1,i}\in(\partial\Sigma)_i$ and trivializations of $\wt F=F\oplus 3F^1$  over  $u_0((\partial\Sigma)_i)$ for $i=1,\dots,h$. The effect on the orientation of
$\det (D_{(E,F)})_{|u_0}$ under  the changes $s^{\wt F}_i\in \pi_1(\textnormal{SO}(n))\cong \{0,1\}$ and
$o^{F^1}_i\in \{0,1\}$  in the trivializations of $\wt F_{|u_0((\prt\Si)_i)}$ and  of $F^1_{|u_0(x_{1,i})}$ is
 the multiplication by $(-1)^\eps$, where
$$\eps=s^{\wt F}_i+(\lr{w_1(F^1), b_i}+1) o^{F^1}_i.$$
\end{prop}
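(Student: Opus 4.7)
The plan is to trace the orientation through the decomposition
$$\det(D_{(E,F)}) \cong \det(\db{\wt E}{\wt F})\otimes \det(\db{E^1}{F^1})\otimes \det(\db{4E^1}{4F^1})^{*}$$
exhibited in the proof of Theorem \ref{main_thm}, orient each of the three factors at $u_0$ using the specified data, and finally compare with a flipped choice of data.

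For the third factor, $4F^1=(\det F)^{\oplus 4}$ is canonically oriented (the local frame $(v,v,v,v)$ is invariant under $v\mapsto -v$ on $\det F$), so the disk-degeneration argument of Proposition \ref{mainor_prop} applied to $(4E^1,4F^1)$ yields a canonical orientation of $\det(\db{4E^1}{4F^1})_{|u_0}$ with no further data required. For the first factor, the same degeneration---pinching $\Si$ along a curve parallel to each $(\prt\Si)_i$---gives
$$\det(\db{\wt E}{\wt F})_{|u_0}\cong \det(\hat D)\otimes\bigotimes_i\bigl(\det(D_i)\otimes\Lambda^{\textnormal{top}}(\hat{\wt E}|_{p_i})^{*}\bigr),$$
with $\det(\hat D)$ and $\Lambda^{\textnormal{top}}(\hat{\wt E}|_{p_i})$ canonically oriented by their complex structures. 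On each disk, $\wt F|_{(\prt\Si)_i}$ has Maslov index $0$, so the trivialization of $\wt F$ over $(\prt\Si)_i$ extends uniquely up to homotopy to a trivialization of $\wt E$ over $D_i^2$ (using $\pi_2(\U(n))=0$), identifying $D_i$ with the standard $\bp_0$ on $(\C^n\times D^2,\R^n\times\prt D^2)$; the kernel of $\bp_0$ is the constant sections $\R^n\cong \wt F|_{x}$, oriented by the given trivialization. For the second factor, the analogous degeneration and Lemma \ref{lemma} identify $\Ker(D_i)\cong\bigoplus_{j=1}^{m_i+1} F^1_{x_{j,i}}$, where $m_i\equiv \lr{w_1(F^1),b_i}\pmod 2$, via evaluation at any $m_i+1$ distinct boundary points; when $m_i=1$ we take $x_{2,i}$ arbitrary and transport the given trivialization of $F^1_{x_{1,i}}$ along the positively oriented arc from $x_{1,i}$ to $x_{2,i}$ to trivialize $F^1_{x_{2,i}}$.

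Tracking the sign changes: flipping $o_i^{F^1}$ in the case $m_i=0$ reverses the orientation of the one-dimensional kernel, while in the case $m_i=1$ it flips both $v_{x_{1,i}}$ and its transport $v_{x_{2,i}}$, so $v_{x_{1,i}}\wedge v_{x_{2,i}}$ is preserved; this reproduces $(\lr{w_1(F^1),b_i}+1)\,o_i^{F^1}$. For $s_i^{\wt F}$, two trivializations of $\wt F$ over $(\prt\Si)_i$ differing by the nontrivial class in $\pi_1(\SO(n))$ can be joined by an interpolating family of trivializations over $(\prt\Si)_i\times[0,1]$, producing a loop of real Cauchy-Riemann operators on $D^2$ over $S^1$ whose underlying bundle pair is a stabilization of the non-orientable example of \cite[Proposition 8.1.7]{FOOO}; Lemma \ref{mainord_lm} then tells us that parallel transport of the orientation along this loop reverses it, giving the factor $(-1)^{s_i^{\wt F}}$. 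The main technical step is precisely this last identification: to show that the $\wt F$-flip genuinely reverses the sign (rather than preserving it), one must match the interpolating family with the non-orientable test case from \cite{FOOO} and invoke the family-level computation of Lemma \ref{mainord_lm}.
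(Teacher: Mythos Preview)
Your argument is correct and follows the same route as the paper: decompose via $(E,F)\oplus(4E^1,4F^1)\cong(\wt E,\wt F)\oplus(E^1,F^1)$, degenerate each piece to disks as in Propositions~\ref{mainor_prop} and~\ref{mainlb_prop}, and then track the sign under a flip of the chosen trivializations exactly as you do.

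One small imprecision: for the $(4E^1,4F^1)$ factor you say ``$4F^1$ is canonically oriented (the local frame $(v,v,v,v)$ is invariant under $v\mapsto -v$)'' and conclude no further data is required. What the disk-degeneration actually needs is a canonical \emph{homotopy class of trivializations} of $4F^1$ over each $(\prt\Si)_i$, not just an orientation (oriented rank-$n$ bundles over $S^1$ still have $\pi_1(\SO(n))$ worth of trivializations). Your parenthetical only makes sense when $F^1|_{(\prt\Si)_i}$ is trivial; when it is M\"obius there is no global frame $v$. The paper handles both cases at once by noting that any trivialization of the orientable rank-two bundle $2F^1$ over $(\prt\Si)_i$ induces one on $4F^1=2(2F^1)$, and that two such differ by the diagonal of a loop in $\SO(2)$, which is null-homotopic in $\SO(4)$. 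With this correction your argument is complete; your treatment of the $s_i^{\wt F}$ flip via an interpolating family and Lemma~\ref{mainord_lm} is equivalent to (if somewhat more elaborate than) the paper's direct appeal to that lemma.
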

\begin{proof}
By (\ref{sum}),
we have a canonical isomorphism
\begin{gather*}\det(D_{(E,F)})\otimes\det(\db{4E^1}{4F^1})\cong \det(\db{\wt
E}{\wt F}) \otimes \det(\db{E^1}{F^1}).\end{gather*}
 Thus, a choice of trivializations over $u_0$ of $\det(\db{4E^1}{4F^1})$,
$\det(\db{\wt E}{\wt F})$, and $\det(\db{E^1}{F^1})$   induces one on
$\det(D_{(E,F)})$.\\

By the proof of Proposition \ref{mainor_prop},  $\det(\db{\wt E}{\wt F})_{|u_0}$ is canonically isomorphic to the
determinant lines of  operators over  disks tensored with the determinant line
over a closed surface and the top exterior powers of  complex bundles. The last two
have canonical orientations coming from the complex structures, and thus we only
need to choose a trivialization of the determinants over the disks. The bundle
pairs over   the disks are trivial. A trivialization of $\wt F$ over each $(\prt\Si)_i$ determines a trivialization of $(\wt E,\wt F)$ over the corresponding disk $D^2_i$, uniquely up to homotopy; see the proof of Lemma \ref{mainord_lm}. The resulting trivialization of $(\wt E, \wt F)$ identifies each determinant line with the determinant line of the standard Cauchy-Riemann operator over the disk, which is canonically oriented.  This implies that a
choice of trivializations of $\wt F$  over  $u_0((\prt\Si)_i)$, with $i=1,\dots,h$,   induces a trivialization of the determinant
line  $\det(\db{\wt E}{\wt F})_{|u_0}$. Changing the homotopy type of the trivialization of $\wt F_{|u_0((\prt\Si)_i)}$ changes the induced orientation of  the determinant line over  $D^2_i$ and thus of $\det \bp_{(\wt E,\wt F)}$; see the proof of Lemma \ref{mainord_lm}. \\

Likewise, a choice of trivializations of
$4F^1$ over   $u_0((\prt\Si)_i)$, with $i=1,\dots,h$,  induces a
trivialization of the determinant line $\det(\db{4E^1}{4F^1})$. However, the bundle $4F^1$ has a canonical (up to homotopy) trivialization over each $(\prt\Si)_i$, which is induced by any trivialization of $2F^1$ over $(\prt\Si)_i$. Thus,  $\det \bp_{(4E^1, 4F^1)}$ has a canonical orientation. \\

By the proof of Proposition \ref{mainlb_prop},  $\det(\db{E^1}{F^1})_{|u_0}$ is isomorphic to the determinant lines
of operators over  disks tensored with the determinant line over a closed
surface and the top exterior powers of  complex vector bundles. The last two are
canonically oriented, and again we only need to choose a trivialization of the
determinant lines  over the disks. By Lemma \ref{lemma}, if $\lr{w_1(F^1), b_i}=0$, the index of the operator on $D^2_i$ is isomorphic to $F^1_{|u_0(x_{1,i})}$. Hence, a
choice of a trivialization of $F^1_{|u_0(x_{1,i})}$ induces a trivialization of
its determinant line; changing the homotopy type of the trivialization of $F^1_{|u_0(x_{1,i})}$ changes the induced orientation of  the determinant line over  $D^2_i$ and thus of $\det \bp_{( E^1, F^1)}$. If $\lr{w_1(F^1), b_i}=1$, the index is
isomorphic to the direct sum of the fibers of $F^1$ over the images of two points
$x_{1,i},x_{2,i}\in(\partial\Sigma)_i$. We can use the orientation of the
boundary of $\Sigma$ to transport a choice of a trivialization of
$F^1_{|u_0(x_{1,i})}$ to $F^1_{|u_0(x_{2,i})}$. In this way again, a choice of
trivializations of $F^1_{|u_0(x_{1,i})}$   determines  trivializations of the
determinant lines of the operators on the disks. However, in this case, changing the homotopy type of the trivialization of $F^1_{u_0(x_{1,i})}$ does not change the induced orientation of  the determinant line over  $D^2_i$ and thus of $\det \bp_{( E^1, F^1)}$.
\end{proof}
 
\begin{proof}[{\bf\emph{Proof of Theorem \ref{orient_thm}}}] 
By Proposition \ref{evmap_prop}, there is an isomorphism 
$$
 \mathfrak{p}\!: \mathcal{Z}_{w_1(\det D_{(E,F)})}\cong \ev^*\mathcal{Z}^F_{(w_1,w_2)}.
 $$
 By Proposition \ref{bp_prop}, a choice of a basepoint $u_0\in \mhp$, with image under the map $\ev$ equal to the basepoint of  $(L\times\lL)^h$, determines an isomorphism. We claim    that the isomorphism is independent of the choice of such $u_0$. Let us first describe the isomorphism $\mathfrak{p}$ for a given $u_0$. The trivializations of $\wt F$ and $F^1$ over $u_0((\partial\Si)_i)$ and $u_0(x_{1,i})$,  for $i=1,\dots, h$, respectively, induce an isomorphism $\mathcal{Z}_{w_1(\det D_{(E,F)})|u_0}\cong \mathbb{Z}$. By construction $\ev^*\mathcal{Z}^F_{(w_1,w_2)|u_0}=\mathbb{Z}$ and   
 $$\mathcal{Z}_{w_1(\det D_{(E,F)})|u_0}\cong \mathbb{Z}\overset{\mathfrak{p}}{\longrightarrow}\mathbb{Z}=\ev^*\mathcal{Z}^F_{(w_1,w_2)|u_0},\quad
 \mathfrak{p}(1)=1.
 $$ If $v\in \mathcal{Z}_{w_1(\det D_{(E,F)})|u}$, then  $\mathfrak{p}(v)=\psi_{\gamma^{-1}}\mathfrak{p}(\phi_\gamma v)$, where $\gamma$ is a path from $u$ to $u_0$, $\phi_\gamma$ is the isomorphism induced by the path in the system $\mathcal{Z}_{w_1(\det D_{(E,F)})}$, and $\psi_\gamma$ is the isomorphism in $\ev^*\mathcal{Z}_{(w_1,w_2)}^F$. This is independent of the path $\gamma$.\\

Let $u'_0$ be another point, with image under the map $\ev$ equal to the basepoint of $(L\times\lL)^h$,  and denote by $\mathfrak{p}'$   the induced isomorphism. We   show that 
$$
\mathfrak{p}(v)=\mathfrak{p}'(v) \quad \forall~  u\in\mhp, \,v\in \mathcal{Z}_{w_1(\det D_{(E,F)})|u}.
$$
 It is enough to confirm this    equality   for $u=u'_0$ and   $v=1\in \mathcal{Z}_{w_1(\det D_{(E,F)})|u'_0}\cong \mathbb{Z}$. Since  $\ev(u_0)=\ev(u'_0)$,  the   isomorphism 
$$
\mathbb{Z}= \mathcal{Z}_{w_1(\det D_{(E,F)})|u_0'}\cong\mathcal{Z}_{w_1(\det D_{(E,F)})|u_0}= \mathbb{Z},
$$ induced by a path $\gamma$ between $u_0'$ and $u_0$
is given by multiplication with $(-1)^\eps$, where
 $$
 \eps =\sum_{i=1}^h (\lr{w_1(F),b_i}+1)\lr{w_1(F), \ev_{x_{1,i}}(\gamma)}+\sum_{i=1}^h \lr{w_2(F),\ev_i(\gamma)},
$$ since this expression traces the change in trivializations of $\wt F$ and $F^1$. By definition, the isomorphism $\psi_\gamma$ equals $\varphi_{\ev(\gamma)}$, where $\varphi_{\ev(\gamma)}$ is the isomorphism in $\mathcal{Z}^F_{(w_1,w_2)}$ corresponding to the loop $\ev(\gamma)$.  By definition,  $\varphi_{\ev(\gamma)}$ is   also the multiplication by  $(-1)^\epsilon$. Therefore, $$\mathfrak{p}(v)=\psi_{\gamma^{-1}}\mathfrak{p}(\phi_\gamma(1))=\varphi_{\ev(\gamma)}\mathfrak{p}((-1)^\epsilon \cdot 1)=(-1)^\epsilon(-1)^\eps \mathfrak{p}(1)=1=\mathfrak{p}'(v).$$
This concludes the proof of Theorem \ref{orient_thm}.
\end{proof}

\begin{remark}\label{compatible_rem} We can choose the basepoint of $\mhp$ for different $(l,{\bf k})$ in a systematic way, thus fixing an isomorphism 
$$
\mathfrak{f}^*\mathcal{Z}_{w_1(\det(D_{(E,F)}))}\cong \mathcal{Z}_{w_1(\det(D_{(E,F)}))}.
$$
We first choose elements in $\mbp\times\cJ_\Si$ inductively. Let $u_0\in \mathfrak{B}^{g,h}_{0,{\bf 0}}(\Si,{\bf b})\times \cJ_\Si$ be a map with $u_{0|(\prt\Si)_i}=\gamma_i$. If we have chosen an element in $\mbp\times \cJ_\Si$, select an element in the space with an additional marked point $\mathfrak{B}^{g,h}_{l',{\bf k}'}(\Si,{\bf b})\times \cJ_\Si$, $l'+|{\bf k}'|=l+|{\bf k}|+1$, by adding a marked point to the given collection.   We choose the basepoint $[u_0,{\bf z},{\bf x_1},\dots,{\bf x_h}]$ of $\mhp$ to be the class of the chosen element in $\mbp\times\cJ_\Si$ and   construct the map $$\ev\!:\mhp\ri(L\times \lL)^h$$ to send $[u_0,{\bf z},{\bf x_1},\dots,{\bf x_h}]$ to the basepoint of the corresponding component.  
\end{remark}

 \begin{proof}[{\bf \emph{Proof of Corollary \ref{ls_cor}}}] Suppose   the moduli space of maps $\mmp$ and the moduli space of domains $\dmp$ are manifolds. The map  $f\!:\mmp\ri \dmp$ then induces a canonical isomorphism 
 \begin{equation*}\begin{split}
 \Lambda^\textnormal{top}T\mmp&\cong f^*\Lambda^\textnormal{top}T\dmp\otimes \Lambda^\textnormal{top}T\mmp^{\textnormal{Vert}}\\&\cong f^*\Lambda^\textnormal{top}T\dmp\otimes \Lambda^\textnormal{top}\ker D_{\bp}.
 \end{split}
 \end{equation*}
 Thus, the local system of orientations on the moduli space of maps is the restriction of the local system of orientations of the bundle $$f^*\Lambda^\textnormal{top}\dmp\otimes \det D_{\bp}\ri\mhp.$$
 
 The moduli space of domains, $\dmp$, is canonically oriented as follows.  
 Let $\mathfrak{M}^{g,h}_{l,{\bf k}'}$ be the moduli space  with one more boundary marked point. The map $\mathfrak{f}$ forgetting the additional marked point induces
 a canonical isomorphism 
  $$
  \Lambda^{\textnormal{top}}T\mathfrak{M}^{g,h}_{l,{\bf k}'}\cong \mathfrak{f}^*\Lambda^{\textnormal{top}}T\dmp\otimes  \Lambda^{\textnormal{top}}(T\mathfrak{M}^{g,h}_{l,{\bf k}'})^{\textnormal{Vert}}.
  $$
 The fiber of the map forgetting the boundary marked point is canonically isomorphic to a subset of the   boundary component the point lies on and is thus canonically oriented. 
 The transition maps of $\Lambda^{\textnormal{top}}(T\mathfrak{M}^{g,h}_{l,{\bf k}'})^{\textnormal{Vert}}$ 
 are diffeomorphisms of the domain preserving the orientation and each boundary component. Therefore, the bundle $\Lambda^{\textnormal{top}}(T\mathfrak{M}^{g,h}_{l,{\bf k}'})^{\textnormal{Vert}}$
  is orientable and canonically oriented by the orientation of a fiber. This implies that if either $\dmp$ or $\mathfrak{M}^{g,h}_{l,{\bf k}'}$ 
 has a canonical orientation, so does the other. By \cite{IS}, the moduli space $\dmp$, with ${\bf k}=(1,\dots,1)$, possesses a holomorphic structure and in particular
  is canonically oriented.
  Therefore, $\dmp$ 
  is canonically oriented   and the orientation is compatible with the maps forgetting the marked points.  Thus,
  the bundle $f^*\Lambda^\textnormal{top}\dmp\otimes \det D_{\bp}$ is canonically isomorphic to $\det D_{\bp}$. Theorem \ref{orient_thm} now implies the result.\\
  
  The case when $\mmp$ is a manifold but $\dmp$ is not can be treated as follows. Consider the moduli space $\mathfrak{M}^{g,h}_{l',{\bf k}'}(M,L,{\bf b})$ with enough marked points so that $\mathfrak{M}^{g,h}_{l',{\bf k}'}$ is a manifold. Then, 
  $$\Lambda^{\textnormal{top}}T\mathfrak{M}^{g,h}_{l',{\bf k}'}(M,L,{\bf b})\cong \det D_{\bp}.$$ Moreover, the map forgetting the additional marked points
  $$ \mathfrak{f}\!:\mathfrak{M}^{g,h}_{l',{\bf k}'}(M,L,{\bf b})\ri \mmp$$
  induces a canonical isomorphism 
 $$
  \Lambda^{\textnormal{top}}T\mathfrak{M}^{g,h}_{l',{\bf k}'}(M,L,{\bf b})\cong \mathfrak{f}^*\Lambda^{\textnormal{top}}T\mmp\otimes  \Lambda^{\textnormal{top}}(T\mathfrak{M}^{g,h}_{l',{\bf k}'}(M,L,{\bf b}))^{\textnormal{Vert}}.
  $$
As above, the bundle $\Lambda^{\textnormal{top}}(T\mathfrak{M}^{g,h}_{l',{\bf k}'}(M,L,{\bf b}))^{\textnormal{Vert}}$ is canonically oriented and thus,
$$
\mathfrak{f}^*\Lambda^{\textnormal{top}}T\mmp\cong \Lambda^{\textnormal{top}}T\mathfrak{M}^{g,h}_{l',{\bf k}'}(M,L,{\bf b})\cong \det D_{\bp}\cong\mathfrak{f}^*\det D_{\bp}.
$$ 
Since $\mathfrak{f}$ is surjective on $\pi_1$, $\Lambda^{\textnormal{top}}T\mathfrak{M}^{g,h}_{l,{\bf k}}(M,L,{\bf b})\cong\det D_{\bp}$; see the  proof of Lemma~\ref{push_lm}. The result follows from Theorem \ref{orient_thm}.
 \end{proof}
  
 \begin{proof}[{\bf \emph{Proof of Corollary \ref{some_cor}}}]
 The line bundles 
 $$
\Lambda^{\textnormal{top}}_\R \mathcal{V}^\R_{n;{\bf a}}\cong \det \bp_{(\mathcal{L}_{n,{\bf a}},\mathcal{L}_{n,{\bf a}}^\R)} 
,~ \Lambda^{\textnormal{top}}_\R T\mathfrak{M}_{l,{\bf k}}^{g,h}(\C\mathbb{P}^n,\R\mathbb{P}^n,{\bf b})\ri \mathfrak{M}_{l,{\bf k}}^{g,h}(\C\mathbb{P}^n,\R\mathbb{P}^n,{\bf b})
$$
are isomorphic if  their first Stiefel-Whitney classes evaluate to  the same number over every loop $\gamma$.
 By Corollary \ref{orient_cor}, the first Stiefel-Whitney class of $\mathcal{V}_{n,{\bf a}}$ evaluated on a loop $\gamma$ is given by 
 \begin{equation}\label{wln_eq}
 \sum_{i=1}^h (\lr{w_1(\mathcal{L}_{n,{\bf a}}^\R), b_i}+1)\lr{w_1(\mathcal{L}_{n,{\bf a}}^\R),
\ev_{x_{1,i}}(\gamma)}+\sum_{i=1}^h \lr{w_2(\mathcal{L}_{n,{\bf a}}^\R),\ev_i(\gamma)} .
 \end{equation}
 
 Let $\mathcal{O}^\R(a)$ denote the tensor product of $a$ copies of the tautological line bundle over $\R\mathbb{P}^n$ and
  $\eta=w_1(\mathcal{O}^\R(1))$ be the generator of $H^1(\R\mathbb{P}^n,\mathbb{Z}_2)$.
 Since $\mathcal{L}_{n,{\bf a}}^\R=\bigoplus_i\mathcal{O}^\R(a_i)$,
 $$w_1(\mathcal{L}_{n,{\bf a}}^\R)=\sum_{i=1}^m a_i\eta, \quad w_2(\mathcal{L}_{n,{\bf a}}^\R)=\sum_{i,j}^m a_ia_j \eta^2.
 $$ 
 Since $w_2(\mathcal{L}_{n,{\bf a}}^\R)$ is a square of a class, it evaluates to zero on each torus $\ev_i(\gamma)$.  
 By Corollary \ref{orient_cor}, the first Stiefel-Whitney class of $\mathfrak{M}^{g,h}_{l,{\bf k}}(\C\mathbb{P}^n,\R\mathbb{P}^n,{\bf b})$ evaluated on the loop $\gamma$ is given by (\ref{wln_eq})  with $\mathcal{L}_{n,{\bf a}}^\R$ replaced by $T\R\mathbb{P}^n$. Since $w_2(\R\mathbb{P}^n)$ is a square of a class, the second sum vanishes in this case as well. The condition $n-\sum a_i$ is odd implies that
 $$
 w_1(\mathcal{L}_{n,{\bf a}}^\R)=(n+1)\eta=w_1(\R\mathbb{P}^n).
 $$
 Thus, the two line bundles are isomorphic.\\
  
   By Proposition \ref{bp_prop}, a choice of trivializations of $ \mathcal{L}_{n,{\bf a}}^\R\oplus3\det(\mathcal{L}_{n,{\bf a}}^\R)$ and $ \det(\mathcal{L}_{n,{\bf a}}^\R)$ over $\ev_i(u_0)$ and $u_0(x_{1,i})$, respectively, with $i=1,\dots,h$, determines a trivialization   $\det \bp_{(\mathcal{L}_{n,{\bf a}},\mathcal{L}_{n,{\bf a}}^\R)|u_0}\cong \mathbb{R}$. Similarly, a trivialization 
 $$\Lambda^{\textnormal{top}}_\R T\mathfrak{M}_{l,{\bf k}}^{g,h}(\C\mathbb{P}^n,\R\mathbb{P}^n,{\bf b})_{|u_0}\cong\mathbb{R}$$
 is determined by a choice of trivializations of $T\R\mathbb{P}^n\oplus 3\det(T\R\mathbb{P}^n)$ and $\det(T\R\mathbb{P}^n)$  over   $\ev_i(u_0)$ and $u_0(x_{1,i})$, respectively, with $i=1,\dots,h$. \\
 
  If $\mathcal{O}^\R$ denotes the trivial line bundle, there are canonical isomorphisms
 $$  
  \mathcal{O}^\R\oplus T\R\mathbb{P}^n\cong (n+1)\mathcal{O}^\R(1),\qquad \det(T\R\mathbb{P}^n)\cong\mathcal{O}^\R(n+1).
 $$
 Thus, a choice of a trivialization of $(n+1)\mathcal{O}^\R(1)\oplus 3\mathcal{O}^\R(n+1)$ over $\ev_i(u_0)$ determines one on $T\R\mathbb{P}^n\oplus 3\det(T\R\mathbb{P}^n)$. 
A choice of a trivialization of $\mathcal{O}^\R(1)$ over $u_0(x_{1,i})$   and the canonical  trivialization  
$$
2\mathcal{O}^\R(1)\cong \mathcal{O}^\R\oplus T\R\mathbb{P}^1\cong 2\mathcal{O}^\R
$$
 over $\ev_i(u_0)$  determine an isomorphism
\begin{equation}\label{oiso_eq}
 \!(n\!+\!1)\mathcal{O}^\R(1)\!\oplus \!3\mathcal{O}^\R(n\!+\!1)\!\cong\!\begin{cases} \frac{n}{2}(2\mathcal{O}^\R)\oplus\mathcal{O}^\R(1)\!\oplus \!3\mathcal{O}^\R(1)\!\cong\! (n\!+\!4)\mathcal{O}^\R,\!&\! \textnormal{if}~2|n,\\
  \frac{n+1}{2}(2\mathcal{O}^\R)\oplus3\mathcal{O}^\R,\!&\!\textnormal{if}~2\!\!\not| n,
\end{cases}\end{equation}
over $\ev_i(u_0)$.  A non-trivial change in the trivialization of $\mathcal{O}^\R(1)$ over $u_0(x_{1,i})$ does not affect the trivialization of $3\mathcal{O}^\R(n\!+\!1)$ in (\ref{oiso_eq}) when $n$ is odd. Thus, the trivialization in (\ref{oiso_eq}) is canonical.
 A choice of a trivialization of $\mathcal{O}^\R(1)$ over $u_0(x_{1,i})$ determines one on $\mathcal{O}^\R(n\!+\!1)\cong \det(T\R\mathbb{P}^n)$. A non-trivial change in the trivialization of $\mathcal{O}^\R(1)$ over $u_0(x_{1,i})$ results in a non-trivial change of trivialization of $\mathcal{O}^\R(n\!+\!1)$ over $u_0(x_{1,i})$ if and only if $n$ is even.
\\

There are canonical isomorphisms
$$
\mathcal{L}_{n,{\bf a}}^\R\cong (m-\wt m)\mathcal{O}^\R\oplus\wt m\mathcal{O}^\R(1), \qquad \det(\mathcal{L}_{n,{\bf a}}^\R)\cong \mathcal{O}^\R(\wt m),
$$
where $\wt m$ is the number of odd $a_i$.
By (\ref{oiso_eq}),   $ \mathcal{L}_{n,{\bf a}}^\R\oplus3\det(\mathcal{L}_{n,{\bf a}}^\R)$ has a canonical trivialization over $\ev_i(u_0)$.  A trivialization of  $\mathcal{O}^\R(1)$ over $u_0(x_{1,i})$ induces one on $\mathcal{O}^\R(\wt m)$, and a non-trivial change in the former results in a non-trivial change in the latter if and only if $\wt m$ is odd. Thus,  a choice of trivializations of
 $\mathcal{O}^\R(1)$ over $u_0(x_{1,i})$ determine isomorphisms
$$\Lambda^{\textnormal{top}}_\R T\mathfrak{M}_{l,{\bf k}}^{g,h}(\C\mathbb{P}^n,\R\mathbb{P}^n,{\bf b})_{|u_0}\cong\mathbb{R},\qquad  \Lambda^{\textnormal{top}}_\R \mathcal{V}^\R_{n;{\bf a}|u_0}\cong\R.$$
 If $\wt m\equiv n+1$ mod $2$, a   change in the  trivialization $\mathcal{O}^\R(1)_{|u_0(x_{1,i})}$  affects both isomorphisms in the same way. Thus, in this case, the composite isomorphism
$$\Lambda^{\textnormal{top}}_\R T\mathfrak{M}_{l,{\bf k}}^{g,h}(\C\mathbb{P}^n,\R\mathbb{P}^n,{\bf b})_{|u_0}\cong\mathbb{R}\cong  \Lambda^{\textnormal{top}}_\R \mathcal{V}^\R_{n;{\bf a}|u_0}$$
is canonical.
  \end{proof}

\end{document}